\newtheorem{theorem}{Theorem}[section]
\newtheorem{lemma}[theorem]{Lemma}
\newtheorem*{remark}{Remark}
\newcommand{\R}{\mathbb R}
\newcommand{\eps}{\varepsilon}
\newcommand{\e}{\eps}
\newcommand{\sech}{\ \mbox{sech}}
\newcommand{\dd}{\, \mathrm{d}}
\newcommand{\mb}{\mathbf}
\numberwithin{equation}{section}
\title[Stability analysis for variable-coefficient scalar field equations]{Linear and orbital stability analysis for solitary-wave solutions of variable-coefficient scalar field equations}
\author{Mashael Alammari}
\address{Department of Mathematical Sciences, Florida Institute of Technology, Melbourne, FL 32901}
\email{malammari2012@my.fit.edu}
\author{Stanley Snelson}
\address{Department of Mathematical Sciences, Florida Institute of Technology, Melbourne, FL 32901}
\email{ssnelson@fit.edu}
\keywords{Scalar-field equations, solitary waves, variable coefficients, spectral perturbation}
\subjclass[2020]{35L71, 35C07, 35P99}
\begin{document}

\maketitle

\begin{abstract}
We study general semilinear scalar-field equations on the real line with variable coefficients in the linear terms. These coefficients are uniformly small, but slowly decaying, perturbations of a constant-coefficient operator. We are motivated by the question of how these perturbations of the equation may change the stability properties of kink solutions (one-dimensional topological solitons). We prove existence of a stationary kink solution in our setting, and perform a detailed spectral analysis of the corresponding linearized operator, based on perturbing the linearized operator around the constant-coefficient kink. We derive a formula that allows us to check whether a discrete eigenvalue emerges from the essential spectrum under this perturbation. Known examples suggest that this extra eigenvalue may have an important influence on the long-time dynamics in a neighborhood of the kink. We also establish orbital stability of solitary-wave solutions in the variable-coefficient regime, despite the possible presence of negative eigenvalues in the linearization. 
\end{abstract}

\section{Introduction}

We consider a semilinear, variable-coefficient scalar field equation of the form
\begin{equation}\label{e:main}
\partial_t^2 u - \left[a(x)\partial_x^2 u +b(x)\partial_x u + c(x) u\right] +  F'(u) = 0, \quad x\in \R.
\end{equation}
Our assumptions on the potential $F$ are
\begin{equation}\label{e:F}
 \begin{split} F \in C^3(\R), \quad F(a_-) = F(a_+) = 0 \text{ for some } a_- < a_+,\\ F'(a_\pm) = 0, \quad F''(a_\pm) =m^2 >0, \quad F(s) > 0, s\in (a_-,a_+). \end{split}
 \end{equation}
 The linear operator $a(x)\partial_x^2 + b(x)\partial_x + c(x)$ is assumed to be a perturbation of the $1D$ Laplacian $\partial_x^2$. More precisely, for a small parameter $\delta>0$, we assume
\begin{equation}\label{e:abc}
\||a-1| + |\partial_xa| + |b| + |c|\|_{L^1(\R)} + \||a-1| + |\partial_x a| + |b| + |c|\|_{L^\infty(\R)} \leq \delta.
\end{equation}

With $\omega(x)= \exp(\int_{-\infty}^x b(z)/a(z) \dd z)$, the energy functional
\[ E(u) := \int_\R \frac {\omega(x)}{a(x)}\left(\frac 1 2 (\partial_t u)^2 + \frac 1 2 a (\partial_x u)^2 - \frac 1 2 cu^2 + F(u)\right) \dd x,\]
is formally conserved under the flow of \eqref{e:main}. We note that equation \eqref{e:main} is not invariant under translations, and that we make no parity assumptions on $F$ or the coefficients $a$, $b$, and $c$.

We are interested in the long-time behavior of solutions to \eqref{e:main}. Our first result (Theorem \ref{t:exist}) is the existence of a stationary solution $T$ of \emph{kink} or \emph{solitary-wave} type, i.e. an increasing stationary solution with $T(x) \to a_{\pm}$ as $x\to \pm\infty$. Standard arguments then show that \eqref{e:main} is locally well-posed for initial data $(u,\partial_t u)|_{t=0} \in H^1_T(\R)\times L^2(\R)$, where $H_T^1(\R) = \{\varphi: \varphi-T\in H^1(\R)\}$. In this context, $H^1_T(\R)\times L^2(\R)$ is referred to as the \emph{energy space}, and indeed, it is not hard to see (using in particular that $|\omega/a - 1|\lesssim \delta$) that functions in this space have finite energy. Our goal is to study the stability of $T$ with respect to small perturbations in the energy space.


\subsection{Motivation} 

One-dimensional kinks such as $T(x)$ are the simplest examples of topological solitons, and thus are an important model for physical phenomena arising in areas such as quantum field theory, condensed matter physics, and cosmology, among others. (See \cite{kinks-domainwalls, lohe,khare,top-sol} for some physics-oriented discussions.) Understanding their stability has proven to be a difficult mathematical challenge. 
The majority of work focuses on the constant-coefficient version of \eqref{e:main},
\begin{equation}\label{e:constant-speed}
 \partial_t^2 u - \partial_x^2 u  + F'(u) = 0.
 \end{equation}
In this constant-coefficient regime, it is standard that the assumptions \eqref{e:F} imply the  existence of a kink solution connecting $a_-$ and $a_+$. (Convenient proofs of this fact may be found in \cite[Lemma 1.1]{KMMV2020kink} or \cite[Proposition 2.1]{jendrej2019kink-antikink}.) This constant-coefficient stationary kink, which we denote by $S$, satisfies 
\begin{equation}\label{e:S}
 -S''  +F''(S) = 0, \quad \lim_{x\to -\infty} S(x) = a_-, \lim_{x\to \infty}S(x) = a_+.
 \end{equation}
We find in Theorem \ref{t:exist} that $T$ and $S$ are close in an appropriate norm.

 Orbital stability of $S$ in the constant-coefficient setting has been known for some time \cite{henry-perez-wreszinski}, and we extend this to our setting in Theorem \ref{t:orbital}. Asymptotic stability of kinks is more subtle, and depends on the specific choice of potential $F$. In particular, the two most studied versions of \eqref{e:constant-speed} are the $\phi^4$ equation with nonlinearity $F'(u) = u^3-u$, which is known to be asymptotically stable with respect to odd perturbations \cite{KMMphi4} and conjectured to be asymptotically stable in general; and the sine-Gordon equation with nonlinearity $F'(u) =\sin(u)$, which is not asymptotically stable, at least with respect to perturbations in the energy space. (See Section \ref{s:examples} for more on these examples.)

Our motivation is to understand the effect of \emph{linear} perturbations of the equation \eqref{e:constant-speed} on the stability properties of kink solutions. On the one hand, given that \eqref{e:constant-speed} is in some sense an idealized model, it is important on physical grounds to understand whether stability properties of kink solutions persist under perturbations of the equation. 
There is also reason to expect such perturbations to have a nontrivial qualitative impact on the stability analysis (rather than simply adding a small error term) in some situations. 
 As we explain below, this is connected with the possibility that a discrete eigenvalue may emerge from the essential spectrum of the linearized operator around the kink. 

\subsection{Main results}

 Before stating our main theorems, we make a technically convenient change of variables in \eqref{e:main} that will be in effect throughout this article.  Letting $y = \int_0^x a^{-1/2}(z) \dd z$, and abusing notation by writing $u(t,y) =  u(t,x(y))$ and $b(y) = b(x(y)) - a^{-1/2}(x(y))\frac d {dy} a^{1/2}(x(y))$, we have 
\begin{equation}\label{e:main-y}
\partial_t^2 u -[\partial_y^2 u + b(y)\partial_y u + c(y)u] + F'(u) = 0.
\end{equation}
The hypotheses \eqref{e:abc} imply
\begin{equation}\label{e:bc}
 \||b| + |c|\|_{L^1(\R)} + \||b| + |c|\|_{L^\infty(\R)} \leq C_0\delta,
 \end{equation}
 for some $C_0>0$.
 
 Our first result is the existence of a stationary kink:
 \begin{theorem}\label{t:exist}
 Assume that $0$ is not an $L^2(\R)$-eigenvalue of the operator $-\partial_y^2 - b\partial_y - (c-F''(S))$. Then, for $\delta>0$ sufficiently small, there exists a solution $T$ to
\begin{equation}\label{e:T}
-T'' -  b(y)T' - c(y)T + F'(T) = 0, \quad \lim_{y\to -\infty} T(y) = a_-, \lim_{y\to \infty}T(y) = a_+.
 \end{equation}
This solution can be written $T(y) = S(y) + S_b(y)$, where $S$ solves \eqref{e:S} and
\[ \|S_b\|_{W^{1,1}(\R)} + \|S_b\|_{W^{1,\infty}(\R)} \leq C\delta.\]
\end{theorem}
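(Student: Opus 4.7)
The plan is to write $T = S + \psi$ and solve for the correction $\psi$ via a Banach fixed-point argument, exploiting the invertibility of the linearized operator granted by the hypothesis. Substituting $T = S + \psi$ into \eqref{e:T} and subtracting the profile equation $-S'' + F'(S) = 0$ yields
\begin{equation*}
L\psi = bS' + cS - N(\psi), \qquad L := -\partial_y^2 - b\partial_y + (F''(S) - c),
\end{equation*}
where $N(\psi) := F'(S+\psi) - F'(S) - F''(S)\psi$ is pointwise quadratic in $\psi$ since $F \in C^3$. The forcing $f := bS' + cS$ satisfies $\|f\|_{W^{1,1}\cap W^{1,\infty}} \lesssim \delta$ by \eqref{e:bc}, the exponential decay of $S'$, and $|S| \lesssim 1$.

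The key analytic step is to show $L : H^2(\R) \to L^2(\R)$ is invertible and that $L^{-1}$ is bounded from $L^1 \cap L^\infty$ to $W^{1,1}\cap W^{1,\infty}$. The unperturbed operator $L_0 := -\partial_y^2 + F''(S)$ is self-adjoint with essential spectrum $[m^2,\infty)$ and a simple eigenvalue at $0$ with eigenfunction $S'$. Since $b,c \in L^1 \cap L^\infty$, the perturbation $-b\partial_y - c$ is relatively compact, so $L$ retains essential spectrum $[m^2,\infty)$ and is Fredholm of index $0$ at spectral parameter $0$; the assumption $\ker L = \{0\}$ then gives invertibility on $L^2$. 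For the pointwise bounds, I would construct the Green's function $G(y,z)$ of $L$ by variation of parameters using fundamental solutions $\phi_\pm$ of $L\phi=0$ decaying as $y \to \pm\infty$. These exist by a Levinson-type asymptotic analysis since $L$ converges to $-\partial_y^2 + m^2$ at infinity, and triviality of $\ker L$ is equivalent to linear independence of $\phi_\pm$, so their Wronskian is nonzero and $G$ decays exponentially off the diagonal.

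With these tools, define the map $\Phi(\psi) := L^{-1}(f - N(\psi))$ on the closed ball $\mathcal B_\rho := \{\psi : \|\psi\|_{W^{1,1}\cap W^{1,\infty}} \leq \rho\}$. Quadratic estimates on $N$ yield $\|N(\psi)\|_{L^1 \cap L^\infty} \lesssim \rho^2$, so $\|\Phi(\psi)\|_{W^{1,1}\cap W^{1,\infty}} \leq C(\delta + \rho^2)$. Taking $\rho = 2C\delta$ and $\delta$ small, $\Phi$ maps $\mathcal B_\rho$ into itself, and a parallel estimate for the difference $\Phi(\psi_1) - \Phi(\psi_2)$ gives the contraction property; the unique fixed point is the claimed $S_b$, with the required norm bound built in. The limits $T(y) \to a_\pm$ follow from $S_b \in W^{1,1}(\R)$ (which forces $S_b \to 0$ at $\pm\infty$) together with the known limits of $S$.

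The main obstacle is controlling $L^{-1}$ quantitatively: the unperturbed $L_0$ has $0$ in its spectrum via $S'$, so the Wronskian of the analogous fundamental solutions for $L_0$ vanishes, and $\|L^{-1}\|$ can only be finite because the hypothesis displaces the perturbed eigenvalue away from $0$. Making this Wronskian estimate precise under the single nondegeneracy assumption, and confirming that the resulting constant remains compatible with the contraction mapping as $\delta$ is taken small, is the principal technical subtlety.
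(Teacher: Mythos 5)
Your proposal is correct and follows essentially the same route as the paper: the ansatz $T=S+S_b$, inversion of $\mathcal L_b=-\partial_y^2-b\partial_y-c+F''(S)$ via a Green's function built from the two solutions of $\mathcal L_b Y=0$ decaying at $\pm\infty$, a quadratic estimate on the nonlinearity, and a contraction argument; the only (cosmetic) differences are that the paper constructs the decaying solutions directly by a Volterra/Jost argument (Lemma \ref{l:system}) rather than passing through $L^2$ Fredholm theory, and contracts in $L^1\cap L^\infty$, recovering the first-derivative bounds afterwards from \eqref{e:derivative-bounds}, rather than working in $W^{1,1}\cap W^{1,\infty}$ from the start.

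Concerning the subtlety you flag at the end: the paper does not prove a quantitative, coefficient-uniform bound on $\mathcal L_b^{-1}$ either. It uses only that the hypothesis forces $Y_\infty$ and $Y_{-\infty}$ to be linearly independent, so the Wronskian satisfies $W(0)\neq 0$, and Abel's formula $W(y)=W(0)\exp(\int_0^y b)$ then keeps $|W(y)|$ bounded below uniformly in $y$; the constants in \eqref{e:Linfbound} and \eqref{e:L1}, and hence the constant $C$ in the theorem and the admissible smallness of $\delta$, are allowed to depend on the coefficients $b,c$ through $|W(0)|$ (the paper says so explicitly after \eqref{e:L1}). Your observation is accurate that, since the unperturbed Wronskian at $\lambda=0$ vanishes ($S'$ decays at both ends), $|W(0)|$ is generically only $O(\delta)$ and can be arbitrarily small while nonzero; the resolution is simply that the theorem's constant and threshold are read as coefficient-dependent, exactly as your sketch anticipates, rather than via a uniform Wronskian estimate. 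So this is not a gap relative to the paper's own argument.
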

Unlike $S$, which satisfies $|S(x)-a_\pm|\lesssim e^{\mp mx}$ and $|S'(x)|\lesssim e^{-m|x|}$, our static kink $T$ does not necessarily posess exponential tails. This behavior is reminiscent of some higher-order, constant-coefficient field theories that do not fit into the assumptions \eqref{e:F} (see e.g. \cite{higher-order-power-law}). Under additional exponential decay assumptions on $b$ and $c$, it is possible to show $T$ has exponential asymptotics at $\pm\infty$ as in \cite[Theorem 1.1]{snelson2016stability}, but we do not explore the details here.
 
Our next result concerns the linearized operator around $T$. Writing $u(t,y) = T(y) + \varphi(t,y)$, the perturbation $\varphi$ satisfies
\[ \partial_t^2 \varphi - \partial_y^2\varphi- b\partial_y \varphi - c\varphi =F'(T) - F'(T+\varphi).\]
Adding $F''(T)\varphi$ to both sides, and defining the linear operator $\mathcal L_T = -\partial_y^2 - b\partial_y - c + F''(T)$ and the nonlinearity $\mathcal N(T,\varphi) = F'(T) - F'(T+\varphi) + F''(T)\varphi = O(\varphi^2)$, the equation for $\varphi$ can be written as a nonlinear Klein-Gordon equation:
\begin{equation}\label{e:nlkg}
 \partial_t^2 \varphi + \mathcal L_T \varphi = \mathcal N(T,\varphi).
 \end{equation}
We are most interested in situations where the spectrum of $\mathcal L_S= - \partial_y^2 + F''(S)$, the operator corresponding to the constant-coefficient kink, is known exactly. We then have $\mathcal L_T = \mathcal L_S - b\partial_y- c +F''(T) - F''(S)$, and we ask how the perturbation $-b\partial_y - c + F''(T) - F''(S)$ changes the spectral properties of $\mathcal L_S$.  


The $L^2(\R)$ spectrum of $\mathcal L_S$ is given by
\[ \sigma(\mathcal L_S) = \{0, \lambda_1,\ldots,\lambda_n\} \cup [m^2,\infty),\]
where $\lambda_1,\ldots,\lambda_n$ is a possibly empty, increasing collection of positive, simple eigenvalues. The eigenfunction corresponding to $0$ is exactly $S'$, the translation invariance mode.


As expected, discrete eigenvalues $\lambda_i$ will drift to nearby discrete eigenvalues $\lambda_i'$ of $\mathcal L_T$ under the perturbation. A more delicate question is whether an extra discrete eigenvalue emerges from the essential spectrum. This aspect is especially relevant when $\mathcal L_S$ has a threshold resonance, i.e. a function $R\in L^\infty(\R)\setminus L^2(\R)$ satisfying $\mathcal L_S R = m^2 R$, as is the case for both the $\phi^4$ and sine-Gordon equations. We derive a criterion in terms of $R$ and the coefficients $b$ and $c$ that governs whether the resonance drifts into a discrete eigenvalue.

Our results on the spectrum of $\mathcal L_T$ are collected in the following theorem:

\begin{theorem}\label{t:spectrum} Let $\mathcal L_T$ and $\mathcal L_S$ be as defined above. There exists a universal $c_0>0$ such that:
\begin{enumerate}
\item[(a)] The spectrum $\sigma (\mathcal L_T)$ is real, the essential spectrum $\sigma_{ess}(\mathcal L_T) = \sigma_{ess}(\mathcal L_S) = [m^2,\infty)$, and $\sigma(\mathcal L_T)$ lies in the $c_0\delta$-neighborhood of $\sigma(\mathcal L_S)$.

\item[(b)] For every eigenvalue $\lambda\in \sigma_d(\mathcal L_S)$ with eigenvector $Y_\lambda$, there is a corresponding $\lambda' \in \sigma_d(\mathcal L_T)$. The eigenvalue $\lambda'$ is real, simple, and satisfies $|\lambda - \lambda'| \leq c_0\delta$.  Also, if 
\[A:= \int_\R Y_\lambda [(F''(T) - F''(S)-c) Y_\lambda - b\partial_yY_\lambda] \dd y \neq 0,\]
then $\lambda' - \lambda$ has the same sign as $A$. The eigenfunction $Y_{\lambda'}$ of $\mathcal L_T$ corresponding to $\lambda'$ satisfies $|Y_{\lambda'}(y)| + |Y_{\lambda'}'(y)|\lesssim e^{-\sqrt{m^2-\lambda'}|y|}$. Furthermore, for suitable normalizations of $Y_\lambda$ and $Y_{\lambda'}$, we have
\[ \|e^{\sqrt{m^2 - \lambda'}|y|}Y_{\lambda'}(y) - e^{\sqrt{m^2-\lambda}|y|} Y_{\lambda}(y)\|_{L^\infty(\R)} \leq C\delta,\]
for a universal constant $C>0$.

\item[(c)] If $m^2$ is a simple resonance of $\mathcal L_S$, and 
\[ \int_\R R[(F''(T) - F''(S)-c) R - b\partial_y R] \dd y <0,\]
then there exists a discrete eigenvalue $\lambda$ of $\mathcal L_T$ with $0< m^2-\lambda < c_0\delta$. The eigenfunction $Y_{\lambda}$ also satisfies $|Y_{\lambda}(y)| + |Y_{\lambda}'(y)|\lesssim e^{-\sqrt{m^2-\lambda}|y|}$ and
\[ \|Y_{\lambda}(y) -  e^{-\sqrt{m^2 - \lambda}|y|}R(y)\|_{L^\infty(\R)} \leq C(k+\delta)e^{-\sqrt{m^2 - \lambda}|y|},\]
for suitable normalizations of $Y_\lambda$ and $R$.

If  
\[ \int_\R R[(F''(T) - F''(S)-c) R - b\partial_y R] \dd y >0,\]
then there are no eigenvalues of $\mathcal L_T$ in $[m^2-c_0\delta, m^2]$, and $m^2$ is non-resonant.

\item[(d)] If $\lambda=m^2$ is not a resonance or an embedded eigenvalue of $\mathcal L_S$, then the same is true of $\mathcal L_T$, and there are no eigenvalues of $\mathcal L_T$ in $[m^2-c_0\delta, m^2]$.
\end{enumerate}
\end{theorem}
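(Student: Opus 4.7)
The plan is to write $\mathcal L_T = \mathcal L_S + V$ with $V = -b\partial_y - c + F''(T)-F''(S)$ and treat $V$ as a small perturbation in $\delta$. A preliminary observation is that even though $\mathcal L_T$ is not symmetric on $L^2(\R)$, the weight $\omega(y) = \exp(\int_{-\infty}^y b(z)\,\mathrm{d}z)$ (with $\omega = 1 + O(\delta)$ uniformly, since $\|b\|_{L^1}\leq C_0\delta$) makes $\mathcal L_T$ symmetric on $L^2(\omega\,\mathrm{d}y)$, a Hilbert space norm-equivalent to $L^2(\R)$. This yields reality of $\sigma(\mathcal L_T)$ in part (a) and lets me use self-adjoint perturbation theory throughout. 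The perturbation $V$ is relatively compact with respect to $\mathcal L_S$ because $F''(T)-F''(S)$, $b$, and $c$ all lie in $L^1\cap L^\infty$ (Theorem \ref{t:exist} gives the first, \eqref{e:bc} the others), so Weyl's theorem gives $\sigma_{ess}(\mathcal L_T) = [m^2,\infty)$. Localization of the discrete spectrum in a $c_0\delta$-neighborhood of $\sigma(\mathcal L_S)$ follows from the resolvent identity and a Neumann series for $(I+V(\mathcal L_S-\lambda)^{-1})^{-1}$ on the complement of that neighborhood.

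For part (b), I would apply Kato--Rellich analytic perturbation theory around each isolated simple eigenvalue $\lambda\in\sigma_d(\mathcal L_S)$. This produces a unique nearby $\lambda'\in\sigma_d(\mathcal L_T)$ with the first-order expansion $\lambda' = \lambda + \langle Y_\lambda, V Y_\lambda\rangle_\omega + O(\delta^2)$; the weighted inner product differs from the standard one by $O(\delta)$, so the leading coefficient equals $A$ up to $O(\delta^2)$, giving the sign claim when $A\neq 0$. Exponential decay $|Y_{\lambda'}(y)|\lesssim e^{-\sqrt{m^2-\lambda'}|y|}$ follows from an Agmon/ODE comparison, since the coefficient of $Y$ in $\mathcal L_T Y = \lambda' Y$ tends to $m^2-\lambda' > 0$ at $\pm\infty$ and the off-diagonal terms $b,c,F''(T)-F''(S)$ are integrable. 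The $L^\infty$-closeness of $e^{\sqrt{m^2-\lambda'}|y|}Y_{\lambda'}$ to $e^{\sqrt{m^2-\lambda}|y|}Y_\lambda$ then comes from a Riesz-projector representation of $Y_{\lambda'}$ combined with the resolvent expansion and exponential control of both eigenfunctions.

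The main obstacle is part (c), the threshold analysis. I would set $\lambda = m^2 - \kappa^2$ with $\kappa\in(0,\sqrt{c_0\delta}]$ and use the Jost-solution representation of the Green's function of $\mathcal L_S-\lambda$. When $\mathcal L_S$ has a simple threshold resonance $R$, the kernel splits as
\[ G_{\mathcal L_S}(y,y';\lambda) = \frac{R(y)R(y')}{2\kappa\,\alpha^2} + G_0(y,y';\lambda),\]
where $\alpha$ is a Wronskian normalization and $G_0$ is bounded in appropriate weighted $L^\infty$ norms as $\kappa\to 0^+$. By the Birman--Schwinger principle, $\lambda$ is an eigenvalue of $\mathcal L_T$ iff $-1$ is an eigenvalue of the compact operator $V(\mathcal L_S-\lambda)^{-1}$. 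Peeling off the rank-one singular part and testing with the trial vector $R$, the condition reduces to the scalar self-consistency equation
\[ -\frac{1}{2\kappa\,\alpha^2}\int_\R R\,V R\,\mathrm{d}y = 1 + O(\kappa+\delta),\]
in which $\int R\,VR\,\mathrm{d}y$ is exactly the integral $A_R$ displayed in the theorem (the term $-b\partial_y R$ is handled by integration by parts using $R\in L^\infty$ and $b\in L^1$). Solving gives $\kappa = -A_R/(2\alpha^2) + O(\delta^2)$: a positive root exists precisely when $A_R < 0$, producing an $O(\delta)$ eigenvalue near $m^2$, while $A_R > 0$ leaves the scalar equation with no solution in the admissible range, confirming non-existence and non-resonance. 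The eigenfunction asymptotic $Y_\lambda \approx e^{-\kappa|y|}R(y)$ emerges from the same fixed-point analysis: on the resonance, the bounded Jost solutions for $\lambda<m^2$ reduce to $e^{-\kappa|y|}R(y)$ modulo $O(\kappa+\delta)$ corrections.

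Finally, part (d) is immediate from the non-resonant case: if $m^2$ is neither a resonance nor an embedded eigenvalue of $\mathcal L_S$, then $G_{\mathcal L_S}(\,\cdot\,;\lambda)$ stays uniformly bounded in weighted $L^\infty$ as $\lambda\uparrow m^2$, so $\|V(\mathcal L_S-\lambda)^{-1}\| = O(\delta) < 1$ uniformly on $[m^2-c_0\delta,m^2]$ and no eigenvalue or resonance of $\mathcal L_T$ can appear in that interval. The principal technical hurdle throughout is to make the near-threshold Green's function expansion rigorous in function spaces compatible with the $L^1\cap L^\infty$ hypotheses on $b$ and $c$, and to control the non-self-adjoint first-order term $-b\partial_y$ in the Birman--Schwinger equation via integration by parts against $R$.
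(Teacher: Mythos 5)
Your parts (a) and (d)'s statement, and the overall decomposition $\mathcal L_T=\mathcal L_S+V$ with self-adjointness in $L^2(\omega\,\dd y)$, match the paper. For part (b) you take a genuinely different route: the paper never invokes Kato--Rellich or Riesz projectors (it stresses that it is ``outside the realm of analytic perturbation theory''), and instead constructs perturbed Jost solutions $U^\lambda_{\pm\infty}$ by Volterra equations (Lemma \ref{l:system}, Lemma \ref{l:approx}) and locates $\lambda'$ as a zero of the Wronskian, extracting the sign of $\lambda'-\lambda$ from the first-order term $\int Y_\lambda[dY_\lambda-b\partial_y Y_\lambda]$. Your projector argument is viable for the eigenvalue location, simplicity and the sign of $A$ (interpolate along $\mathcal L_S+tV$, $t\in[0,1]$, since $V$ is $\mathcal L_S$-bounded with relative bound $O(\delta)$); note, however, that the weighted bound $\|e^{\sqrt{m^2-\lambda'}|y|}Y_{\lambda'}-e^{\sqrt{m^2-\lambda}|y|}Y_\lambda\|_{L^\infty}\le C\delta$ does not follow from resolvent/$H^1$ closeness alone and in practice forces you back to exactly the Jost/Volterra comparison of Lemma \ref{l:approx}, which is what the paper uses.

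The genuine gap is in parts (c)--(d). Your key step is the splitting $G_{\mathcal L_S}(y,y';\lambda)=\frac{R(y)R(y')}{2\kappa\alpha^2}+G_0$ with ``$G_0$ bounded in appropriate weighted $L^\infty$ norms'' and $\|VG_0\|=O(\delta)$, so that testing against $R$ gives the scalar equation with error $O(\kappa+\delta)$. In one dimension this fails under the paper's hypotheses: already for the free resolvent, $e^{-\kappa|y-y'|}/(2\kappa)-1/(2\kappa)$ is of size $\min(|y-y'|,1/\kappa)$, so the ``regular part'' grows linearly (saturating at $1/\kappa$), and controlling $VG_0$, or the error made in replacing the $\kappa$-dressed Jost functions $e^{-\kappa|y|}R(y)$ by $R(y)$, requires a first-moment bound $\int(1+|y|)\big(|b|+|c|+|F''(T)-F''(S)|\big)\dd y$. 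The hypotheses \eqref{e:abc}--\eqref{e:bc} give only $L^1\cap L^\infty$ control, and Theorem \ref{t:exist} gives $S_b\in W^{1,1}\cap W^{1,\infty}$ with no pointwise decay rate, so these moments are simply not available; the paper emphasizes this slow decay as the reason Gap-Lemma-type tools are excluded. Quantitatively, in the relevant regime $\kappa\lesssim\delta$ the discarded terms are of size $\delta/\kappa$, i.e.\ of the same order as the main term $\frac{1}{2\kappa\alpha^2}\int RVR$, so your self-consistency equation does not decide existence versus non-existence; the same moment obstruction undermines the claim in (d) that $\|V(\mathcal L_S-\lambda)^{-1}\|=O(\delta)$ uniformly up to the (non-resonant) threshold, since there the Green's function still grows linearly in one variable. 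This is precisely why the paper avoids inverting $\mathcal L_S-\lambda$ near $m^2$ altogether: it builds $U^\lambda_{\pm\infty}$ for the \emph{perturbed} operator directly (Lemma \ref{l:volterra}, Lemma \ref{l:approx-r}), evaluates the Wronskian at $y=0$, and keeps the $\kappa$-dependent factor intact in $A(k)=k^2\int e^{-k|w|}R^2\dd w\asymp k$, so that every error term is an integral of $(|b|+|d|)$ against uniformly bounded functions and is controlled by $L^1$ norms alone. To salvage your Birman--Schwinger route you would either need to add moment/decay assumptions on $b,c$ (strictly stronger than the theorem's hypotheses) or redo the threshold expansion with the $\kappa$-dependent resonance function, at which point you have essentially reproduced the paper's Evans-function computation.
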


Part (a) of this theorem is standard, and included for clarity of exposition. Part (b) is arguably not surprising, but its proof (see Section \ref{s:spectrum}) is a useful warm-up for parts (c) and (d). We also remark that the formulas in this theorem may be replaced with (more cumbersome, but in some sense more elementary) formulas that depend only on $F$, $S$, $b$, and $c$, via a first-order approximation for $F''(T) - F''(S)$. (See \eqref{e:formula1} and \eqref{e:formula2}.)

The possible extra eigenvalue as in Theorem \ref{t:spectrum}(c) is one of our primary motivations for performing this perturbation analysis. In general, eigenvalues lying in between $0$ and $m^2$ have a profound impact on the stability properties of the kink. At the very least, any proof of asymptotic stability or instability for $T$ would likely need to account for this extra eigenvalue in some way. 

 It should be noted that we are outside the realm of analytic perturbation theory, since we do not assume any continuity of the coefficients $b,c$ with respect to $\delta$. Our spectral analysis is based on the well-known method of finding solutions $U_{\pm\infty}^\lambda$ to the eigenvalue equation $\mathcal L_T U^\lambda = \lambda U^\lambda$ which decay at $\pm\infty$, and studying the Evans function (see e.g.  \cite{evans1974evans-function, jones1984evans-function, pego-weinstein, kapitula2002evans, kapitula2004evans}) which is related to the Wronskian of $U_\infty^\lambda$ and $U_{-\infty}^\lambda$. The key property is that the Wronskian is zero when $\lambda$ is an eigenvalue or resonance of $\mathcal L_T$. The slow decay of our coefficients $b$ and $c$ (as well as $F''(T) - F''(S)$) rules out tools such as the Gap Lemma (see \cite{alexander1990evans, gardner1998gaplemma}) which would allow one to analytically continue the Evans function past the threshold $\lambda = m^2$, but which requires exponential decay of the coefficients. 




Our last main result establishes the orbital stability of $T$:

\begin{theorem}\label{t:orbital}
There exists an $\eps>0$, depending on $\delta$, such that for any initial data $(u, \partial_t u)\Big|_{t=0} = (T + v_1,v_2)$ for $(v_1,v_2)\in L^2\times H^1$ with
\[ \|(v_1,v_2)\|_{H^1(\R)\times L^2(\R)} < \eps,  \]
the corresponding solution $u$ to \eqref{e:main-y} exists globally in time, and satisfies
\[ \|u-T\|_{H^1(\R)} + \|\partial_t u\|_{L^2(\R)} \leq C\eps,\]
for some $C$ depending on $\delta$ and $F$.
\end{theorem}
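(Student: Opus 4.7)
The plan is to adapt the classical energy method for orbital stability to our variable-coefficient setting, with the main wrinkle being the possible negative eigenvalue of $\mathcal{L}_T$ allowed by Theorem~\ref{t:spectrum}. Writing $u(t) = T + \varphi(t)$ and $\psi(t) = \partial_t u(t)$, I would first expand the conserved energy $E$ around $(T,0)$. Since $T$ satisfies \eqref{e:T}, it is a critical point of the static part of $E$, so Taylor's theorem yields
\[
E(u(t)) - E(T) = \tfrac12\int \tfrac{\omega}{a}\psi^2\dd y + \tfrac12\langle \mathcal{L}_T \varphi,\varphi\rangle_\omega + \mathcal{R}(\varphi),
\]
where $|\mathcal{R}(\varphi)| \lesssim \|\varphi\|_{H^1}^3$ using $F \in C^3$ together with the embedding $H^1(\R)\hookrightarrow L^\infty(\R)$ and boundedness of $T$. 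By \eqref{e:bc}, the weights $\omega$ and $\omega/a$ are uniformly close to $1$ for small $\delta$, so weighted and ordinary $L^2$ inner products are comparable.

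The central step is proving coercivity of the quadratic form $\langle \mathcal{L}_T \varphi,\varphi\rangle_\omega$ on $H^1$. By Theorem~\ref{t:spectrum}, $\sigma(\mathcal{L}_T)\subset [-c_0\delta,\infty)$ with a uniform gap in the following sense: apart from possibly one eigenvalue $\lambda_*\in [-c_0\delta,c_0\delta]$ (the drift of the translation mode, with eigenfunction $Y_*\approx S'$), the remaining spectrum lies near the positive discrete eigenvalues $\lambda_i$ of $\mathcal{L}_S$ and the threshold $m^2$, hence is bounded away from $0$ uniformly in small $\delta$. I would decompose $\varphi = \alpha Y_* + \varphi^\perp$ with $Y_*$ normalized in $L^2_\omega$ and $\varphi^\perp \perp Y_*$, combine a G{\aa}rding inequality for $\mathcal{L}_T = -\partial_y^2 - b\partial_y + F''(T) - c$ (using $F''(T)\to m^2$ at $\pm\infty$ and $|b|+|c|\lesssim\delta$) with the spectral gap to obtain $\langle \mathcal{L}_T\varphi^\perp,\varphi^\perp\rangle_\omega \ge c\,\|\varphi^\perp\|_{H^1}^2$, and bound the remaining contribution crudely by $|\lambda_*|\alpha^2 \lesssim \delta\|\varphi\|_{L^2}^2$.

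Combining this with the energy expansion and the conservation law $E(u(t)) = E(u(0))$ yields an a priori inequality of the schematic form
\[
\|\psi(t)\|_{L^2}^2 + c\,\|\varphi(t)\|_{H^1}^2 \le C\eps^2 + C\delta\|\varphi(t)\|_{L^2}^2 + C\|\varphi(t)\|_{H^1}^3.
\]
For $\delta$ small enough, the $C\delta\|\varphi\|_{L^2}^2$ term is absorbed into the left-hand side, and a standard continuity/bootstrap argument closes the estimate for $\eps$ sufficiently small (depending on $\delta$). Global existence and the bound $\|\varphi\|_{H^1} + \|\psi\|_{L^2} \le C\eps$ then follow; local well-posedness in the energy space is classical for nonlinear Klein--Gordon-type equations with bounded, measurable lower-order coefficients.

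The main obstacle is clearly the coercivity step. In the constant-coefficient setting of \cite{henry-perez-wreszinski} one modulates the kink's translation parameter to annihilate the zero mode $S'$; here translation invariance is broken and no exact modulation is available. The argument must instead leverage the quantitative fact $|\lambda_*|\lesssim \delta$ from the perturbation analysis of Theorem~\ref{t:spectrum} to absorb any negative-mode contribution into the coercive terms. This is precisely what forces both $\eps$ and the stability constant $C$ in the theorem statement to depend on $\delta$, and explains the sense in which orbital stability persists ``despite the possible presence of negative eigenvalues.''
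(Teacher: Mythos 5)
There is a genuine gap at the step you yourself identify as central: the coercivity of $\langle \mathcal L_T\varphi,\varphi\rangle_\omega$. After decomposing $\varphi=\alpha Y_*+\varphi^\perp$, the quadratic form contributes only $\lambda_*\alpha^2$ in the $Y_*$ direction, and Theorem \ref{t:spectrum} guarantees nothing better than $|\lambda_*|\le c_0\delta$, with no sign: the near-zero eigenvalue may be exactly zero or negative (the paper explicitly allows negative eigenvalues of the linearization). Consequently your schematic a priori inequality, which has $c\,\|\varphi\|_{H^1}^2$ on the left, does not follow: the left-hand side produced by the energy expansion is only $\|\psi\|_{L^2}^2+c\,\|\varphi^\perp\|_{H^1}^2+\lambda_*\alpha^2$, and there is no positive term of size $\alpha^2$ into which the error can be ``absorbed'' --- absorption presupposes the very term that is missing. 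Saying that $|\lambda_*|\lesssim\delta$ bounds the size of the bad contribution, but it does not create coercivity along $Y_*$, and without control of $\alpha(t)$ you cannot conclude $\|u-T\|_{H^1}\lesssim\eps$ uniformly in time. This is not a technicality: in the limiting case $b=c=0$ the component along $S'$ genuinely grows linearly for boosted perturbations, so no argument of this shape can close unless it either modulates or exploits strict positivity $\lambda_*>0$, which Theorem \ref{t:spectrum} does not provide (the sign criterion $A\neq 0$ need not hold, and $A$ may be negative).

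The paper's proof avoids the spectral decomposition of $\mathcal L_T$ altogether. It works with the orbit distance $d_q(\psi,T)^2=\inf_{\xi}\int[(\partial_y\psi-T'(\cdot+\xi))^2+q(\psi-T(\cdot+\xi))^2]\dd y$, i.e.\ the infimum over translates plays the role of modulation even though the equation is no longer translation invariant, and it imports the constant-coefficient coercivity of Henry--P\'erez--Wreszinski \cite{henry-perez-wreszinski}, namely $d_q(\psi,S)^2\le C(\tilde E_p(\psi)-\tilde E_p(S))$ for $d_q(\psi,S)\le r$. The variable-coefficient potential energy $E_p$ and kink $T$ are then compared to $\tilde E_p$ and $S$ with $O(\delta)$ errors, and conservation of $E$ yields a time-independent bound on $d_q(u(t,\cdot),T)$. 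If you want to salvage your direct Lyapunov approach, you would need to add exactly such a step (minimize over translates of $T$ or $S$, or otherwise eliminate the $Y_*$ component), since the $\delta$-smallness of $\lambda_*$ alone cannot substitute for it.
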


The proof is based on classical energy arguments, but must contend with the lack of translation invariance.


\subsection{Examples}\label{s:examples}

\subsubsection{$\phi^4$ model} The choice of a double-well potential $F(u) = \frac 1 4 (1-u^2)^2$ leads to the $\phi^4$ model
\begin{equation}\label{e:phi4}
 \partial_t^2 u - \partial_x^2 u = u - u^3.
 \end{equation}
Standard references on this equation include \cite{segur1987, cuccagna2008kink, KMMphi4, ross2019}. In this case, the kink solution $S(x) = \tanh(x/\sqrt2)$ is known explicitly, and the linearization $\mathcal L_S = -\partial_x^2 +(3S^2 - 1)$ has spectrum equal to 
\[ \sigma(\mathcal L_S) = \left\{0, \frac 3 2\right\}\cup [2,\infty).\]
The odd eigenfunction $Y_{3/2} = \tanh(x/\sqrt 2)\sech(x/\sqrt 2)$ corresponding to $\lambda = \frac 3 2$ is known as the \emph{internal oscillation mode}. The operator $\mathcal L_S$ also possesses an even resonance $R = 2\tanh^2(x/\sqrt 2)- \sech^2(x/\sqrt 2)$ at the threshold $\lambda = 2$. 

The $\phi^4$ kink is asymptotically stable with respect to \emph{odd} perturbations in the energy space, by the important work of Kowalczyk-Martel-Mu{\~n}oz \cite{KMMphi4}.  When working in the odd energy space, the even translation invariance mode at $\lambda=0$ and the even resonance do not play any role, but the internal oscillation mode has a dramatic effect on the dynamics.  The method of \cite{KMMphi4} involved projecting $\varphi$ onto $Y_{3/2}$ and the continuous spectrum, and carefully tracking the interaction between these two parts induced by the nonlinear terms of \eqref{e:nlkg}. A delicate coupling between the internal oscillation mode and the continuous part leads to the dissipation of energy away from a neighborhood of the kink. 

Asymptotic stability with respect to odd perturbations was extended to a variable-coefficient version of \eqref{e:phi4} by the second named author in \cite{snelson2016stability}, though the coefficients were less general than those considered here (only a second-order perturbation, which was taken to be even and exponentially decaying). The symmetry assumption means that any eigenvalue emerging from the essential spectrum would be even, and therefore can be ignored.

It remains an important open question whether this kink is asymptotically stable with respect to general perturbations. Our Theorem \ref{t:spectrum} implies that for certain choices of $b, c$ in \eqref{e:main-y}, the bottom of the continuous spectrum is non-resonant and there are no extra discrete eigenvalues. Such a version of \eqref{e:main-y} could serve as an interesting test case for the $\phi^4$ asymptotic stability problem, especially if one is convinced that the threshold resonance is an important source of difficulties.

\subsubsection{Sine-Gordon equation}

The choice $F(u) = 1-\cos(u)$ results in the sine-Gordon equation:
\[ \partial_t^2 u - \partial_x^2 u = -\sin(u).\]
This equation arises in the study of superconductivity as well as of surfaces with constant negative curvature, among other areas. (See e.g. \cite{ivancevic-sinegordon, cuenda2011sg, sine-gordon-book} for background on this equation.)

The explicit static kink is given by $S(x) = 4\arctan(e^x)$. The equation, which is completely integrable, possesses other special solutions including breathers and wobbling kinks \cite{cuenda2011sg, segur1983wobbles}. The presence of these wobbling kinks (periodic-in-time, spatially localized perturbations of the kink) implies that $S$ is not asymptotically stable in the energy space. (However, see \cite{chen2020sine-gordon} for an asymptotic stability result in a different topology, and \cite{AMP2020sinegordon}, which identified an infinite-codimensional manifold of initial data near the kink for which asymptotic stability in the energy space does hold.) With $\mathcal L_S = -\partial_x^2 + \cos(S)$ the linearization around $S$, it is known that
\[ \sigma(\mathcal L_S) = \{0\}\cup [1,\infty),\]
The failure of asymptotic stability in the energy space is consistent with the absense of an internal oscillation mode, which rules out the mechanism of stability observed for the $\phi^4$ model in \cite{KMMphi4}. However, there is an odd resonance $R(x) = \tanh(x)$ at the bottom of the continuous spectrum. Our Theorem \ref{t:spectrum} gives conditions under which the variable-coefficient version of sine-Gordon possesses a discrete eigenvalue $\lambda$ with $0 < 1-\lambda \ll 1$. In this case, one may ask whether the new odd eigenfunction behaves sufficiently like an internal oscillation mode that a stability mechanism like the one mentioned above comes into force. We plan to explore this question in a future article.

Somewhat different perturbed forms of the sine-Gordon equation have been considered in, e.g.,  \cite{denzler1993sinegordon, derks-singular, danna-sinegordon, fiore-sinegordon}. The general belief is that breathers and wobbles are non-generic phenomena, so one may conjecture that some dense set of coefficients satisfying \eqref{e:bc} lead to asymptotic stability.

\subsubsection{Other examples}

Let us briefly mention some other models whose variable-coefficient counterparts are included in our setting: the $P(\phi)_2$ theory \cite{lohe}, the double-sine-Gordon equation \cite{double-sine-gordon}, and certain higher-order field theories \cite{khare}, i.e. potentials equal to a polynomial of even degree, which in some cases satisfies the assumptions \eqref{e:F} and other cases not.

\subsection{Related work} 

The asymptotic stability of kinks in scalar field equations such as \eqref{e:constant-speed} is an active area of inquiry. In addition to the results mentioned above, we should mention the  recent work  of Kowalczyk-Martel-Mu{\~n}oz-Van Den Bosch \cite{KMMV2020kink}, which proved asymptotic stability for a general class of scalar-field models satisfying a condition on the potential $F$ that, in particular, rules out internal oscillation modes and threshold resonances. In the setting of odd perturbations, Delort-Masmoudi \cite{delort-masmoudi2020kink} established explicit decay rates for odd perturbations of the $\phi^4$ kink on time scales of order $\eps^{-4}$, where $\eps$ is the size of the initial perturbation. Let us also mention asymptotic stability results by Komech-Kopylova \cite{KKgl1,KKgl2} for kink solutions of relativistic Ginzburg-Landau equations, which are of the form \eqref{e:constant-speed} with additional assumptions of the flatness of $F$ at $a_\pm$.

This class of questions is a partial motivation for the closely related subject of scattering theory for NLKG equations similar to \eqref{e:nlkg}. See \cite{delort2001nlkg, delort2004nlkg, lindblad2015scattering,sterbenz2016decay,lindblad2019scattering,lindblad2020scattering, germain2020nlkg} and the references therein.

The operator $\mathcal L_S$ is (up to subtraction by $m^2I$) a Schr\"odinger operator with rapidly decaying potential. There is a well-established theory of spectral perturbation of Schr\"odinger and related operators, see e.g. the review \cite{simon2000review} for an overview. 
Works that specifically address perturbation of threshold resonances include \cite{jensen-melgaard, 2d-threshold, gh-resonances, rauch1980resonances}. As mentioned above, aspects such as the slow decay of coefficients and lack of continuous dependence on $\delta$ make it convenient to perform the perturbation ``by hand'' in our setting, rather than apply an abstract theorem or existing result.

\subsection{Outline of the paper} In Section \ref{s:stationary}, we prove the existence of the stationary solution $T$. In Section \ref{s:spectrum}, we perform a spectral perturbation analysis of the linearized operator around the kink, and in Section \ref{s:orbital}, we establish orbital stability of $T$.  Appendix \ref{s:a} contains some useful lemmas on the global solvability of second-order ODE systems.

\section{Stationary solution}\label{s:stationary}

First, we recall the existence of the static kink in the constant-coefficient case, which can be found by explicitly integrating the equation $S'' = F''(S)$. We quote from \cite[Lemma 1.1]{KMMV2020kink}:
\begin{lemma}\label{l:constant-speed}
Under the assumptions \eqref{e:F} on $F$, there is a solution $S\in C^4(\R)$ to the stationary equation
\[ -S'' + F'(S) = 0,\]
with $S'>0$ and $S\to a_{\pm}$ as $y\to \pm \infty$. Furthermore, $S$ and $S'$ satisfy
\[ |S(x) - a_\pm| \leq C e^{\mp m y}, \quad |S'(x)| \leq Ce^{-m|y|},\]
and the energy of $S$ is finite:
\[ \int_{\R} [S'(x)^2 + F(S(x))] \dd x < \infty.\]
\end{lemma}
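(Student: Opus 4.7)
This is the classical static-kink existence lemma, and my plan is to reduce the second-order equation $-S'' + F'(S) = 0$ to a first-order separable ODE via an energy identity, then recover the asymptotic behavior from quadrature.

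First, I would multiply the equation by $S'$ and integrate once in $y$ to obtain the conservation law $\tfrac{1}{2}(S')^2 - F(S) = \text{const}$. Any candidate kink is bounded and monotone, so must have $S'(y) \to 0$ as $y \to \pm\infty$; combined with $F(a_\pm) = 0$, this forces the constant to vanish. Since we seek an increasing solution and $F > 0$ on $(a_-, a_+)$, I take the positive branch $S' = \sqrt{2 F(S)}$.

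Second, I would construct $S$ explicitly by separation of variables: fix any $s_0 \in (a_-, a_+)$ and define $S$ implicitly by
\[ y \;=\; \int_{s_0}^{S(y)} \frac{\dd s}{\sqrt{2 F(s)}}. \]
The integrand is positive and continuous on $(a_-, a_+)$; what must be verified is that the improper integrals toward $a_\pm$ diverge, so that $y$ ranges over all of $\R$ as $S$ ranges over $(a_-, a_+)$.

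Third, I would extract the endpoint behavior and exponential rate using Taylor's theorem. The hypotheses $F(a_\pm) = F'(a_\pm) = 0$ and $F''(a_\pm) = m^2$ give $F(s) = \tfrac{m^2}{2}(s - a_\pm)^2 + O((s - a_\pm)^3)$, so $\sqrt{2 F(s)} \sim m |s - a_\pm|$ near the endpoints. Hence the defining integral diverges logarithmically at $a_\pm$, confirming $S(y) \to a_\pm$ as $y \to \pm\infty$ and, after inverting the logarithm, yielding $|S(y) - a_\pm| \lesssim e^{\mp m y}$. Then $|S'(y)| = \sqrt{2 F(S(y))} \lesssim |S(y) - a_\pm| \lesssim e^{-m|y|}$ follows immediately. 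This endpoint analysis is the only nontrivial step; everything else is routine.

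Finally, $S \in C^4$ follows by bootstrapping: $S'$ is given by the first-order formula, $S'' = F'(S)$ from the original equation, and $S''', S^{(4)}$ use $F \in C^3$. Monotonicity $S' > 0$ is built into the positive branch, and finite energy follows by bounding $(S')^2 \lesssim e^{-2m|y|}$ and $F(S) \lesssim (S - a_\pm)^2 \lesssim e^{-2m|y|}$ and integrating.
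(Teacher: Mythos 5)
Your proof is correct and is essentially the argument the paper relies on: the paper does not prove this lemma itself but quotes it from the cited references, where the proof is precisely this quadrature method (multiply by $S'$, reduce to $S'=\sqrt{2F(S)}$, separate variables, and read off the exponential rates from the quadratic vanishing of $F$ at $a_\pm$). The only cosmetic remark is that the preliminary step identifying the constant of integration is merely motivational; the constructed $S$ is verified directly by differentiating the implicit formula, so nothing is missing.
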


We now prove the existence of a static kink $T(y)$ for our equation \eqref{e:main-y}:

\begin{proof}[Proof of Theorem \ref{t:exist}]
Let $S$ be the stationary solution to $-S'' + F'(S) = 0$ guaranteed by Lemma \ref{l:constant-speed}. Making the ansatz $T = S + S_b$, we have the following equation for $S_b$:
\begin{equation}\label{e:Sb}
 \begin{split}
 -S_b'' -  b S_b' - c S_b &= bS' + cS - F'(S+S_b) + F'(S) \\
&= bS' +cS - F''(S)S_b - \mathcal N(S,S_b),
\end{split}
\end{equation}
where $\mathcal N(S,S_b) = F'(S+S_b) -F'(S)  - F''(S)S_b$. 
Defining
\begin{equation*}
\mathcal{L}_b = -\partial_y^2 - b(y) \partial_y -c(y) + F''(S)(y) = \mathcal{L}_S - b(y) \partial_y - c(y),
\end{equation*} 
equation \eqref{e:Sb} becomes
\begin{equation}\label{e:Sb2}
\mathcal{L}_b S_b = b S' + cS - \mathcal N(S, S_b).
\end{equation}

We can find solutions $Y_{-\infty}, Y_\infty$ both satisfying $\mathcal L_b Y_{\pm \infty} = 0$, with $\lim_{y\to -\infty} Y_{-\infty} = 0$ and $\lim_{y\to \infty} Y_{\infty} = 0$. In more detail, $\mathcal L_b Y= 0$ may be written as the linear system $\mb Y' = (M_1 + M_2(y)) \mb Y$, with $\mb Y = (Y, Y')$, and
\[  M_1 = \left(\begin{array}{cc}0 & 1\\ m^2  & 0     \end{array}\right), \quad M_2(y) = \left(\begin{array}{cc}0 & 0\\ -c(y) + F''(S)(y) - m^2  & -b(y)     \end{array}\right). \]
Lemma \ref{l:system} below implies existence of $Y_\infty$ and $Y_{-\infty}$. 
In particular, $Y_\infty$ and $Y_{-\infty}$ are linearly independent, since otherwise there would be a nontrivial solution in $L^2$ to $\mathcal L_b Y = 0$, contradicting our assumption that $0$ is not an eigenvalue.

Define the Green's function
\[ G(y,w) := \frac 1 {W_{\mb Y}(y)} \begin{cases} Y_{-\infty}(y) Y_\infty(w), & y< w,\\
Y_{\infty}(y)Y_{-\infty}(w), & w\leq y,\end{cases}\]
where $W_{\mb Y}(y) = \det(\mb Y_{-\infty} ,\mb Y_{\infty})$. Abel's formula implies $W_{\mb Y}(y) = W_{\mb Y}(0)\exp(\int_{0}^y b(z) \dd z)$, which for $\delta>0$ sufficiently small, is bounded uniformly away from 0.  

For the inverse operator $\eta \mapsto \int_\R G(\cdot,w)\eta(w)\dd w$, we have the following useful bounds. First,
\begin{equation}\label{e:Linfbound}
\begin{split}
 \left|\int_\R G(y,w) \eta(w) \dd w\right|  &= \left|Y_\infty(y)\int_{-\infty}^y \frac {Y_{-\infty}(w)}{W_{\mb Y}(w)}\eta(w)\dd w + Y_{-\infty}(y)\int_{y}^\infty \frac {Y_{\infty}(w)}{W_{\mb Y}(w)} \eta(w)\dd w\right|\\
& \leq C \|\eta\|_{L^\infty(\R)}\left( e^{-my} \int_{-\infty}^y e^{m w} \dd w + e^{my}\int_y^\infty e^{-mw} \dd w\right)\\
&\leq C \|\eta\|_{L^\infty(\R)},
\end{split}
\end{equation}
for all $y\in \R$. We also have
\begin{equation}\label{e:L1bound}
\left|\int_\R\int_\R G(y,w) \eta(w) \dd w \dd y\right| \leq C\left(\int_\R e^{-my} \int_{-\infty}^y e^{mw} |\eta(w)| \dd w \dd y + \int_\R e^{my}\int_y^\infty e^{-mw} |\eta(w)| \dd w \dd y\right).
\end{equation}
For the first term on the right, we integrate by parts to obtain
\[ \int_\R e^{-my} \int_{-\infty}^y e^{mw} |\eta(w)| \dd w \dd y = \int_{-\infty}^y \frac{e^{m(w-y)}}{-m} |\eta(w)| \dd w \Big|_{y=-\infty}^{y=\infty} - \int_\R \frac {e^{-my}}{-m} e^{my} |\eta(y)| \dd y.\]
If $\eta\in L^1(\R)$, then since $e^{m(w-y)} \leq 1$, the boundary term at $-\infty$ vanishes, and the boundary term at $\infty$ is bounded by $\frac 1 m \|\eta\|_{L^1(\R)}$. After applying a similar calculation to the last term in \eqref{e:L1bound}, we conclude
\begin{equation}\label{e:L1}
 \left\|\int_\R G(\cdot,w)\eta(w)\dd w\right\|_{L^1(\R)} \leq C \|\eta\|_{L^1(\R)},
 \end{equation}
for a constant depending on $m$ and the coefficients $b,c$. The estimates \eqref{e:Linfbound} and \eqref{e:L1} clearly hold also if we replace $G(y,w)$ with $|G(y,w)|$. 

In addition, using $|Y_{\pm\infty}'(y)|\lesssim e^{\mp my}$, estimates similar to \eqref{e:Linfbound} and \eqref{e:L1} imply
\begin{equation}\label{e:derivative-bounds}
 \left\| \partial_y \int_{\R} G(y,w) \eta(w)\dd w \right\|_{L^1(\R)} \leq C\|\eta\|_{L^1(\R)}, \quad \left\| \partial_y \int_{\R} G(y,w) \eta(w)\dd w \right\|_{L^\infty(\R)} \leq C\|\eta\|_{L^\infty(\R)}.\end{equation}

Now we write the equation \eqref{e:Sb2} for $S_b$ as
\begin{equation}\label{e:integral}
 S_b(y) = (\mathcal T S_b)(y) := g(y) - \int_{\R} G(y,w) \mathcal N(S,S_b) \dd w,
 \end{equation}
where
\[ g(y) = \int_{\R} G(y,w) [b(w) S'(w) +c(w) S(w)]\dd w.\]

We want to find a fixed point for $\mathcal T$ in the space $X := L^1(\R)\cap L^\infty(\R)$ with norm $\|\cdot\|_{X} := \|\cdot\|_{L^1(\R)} + \|\cdot\|_{L^\infty(\R)}$. From \eqref{e:Linfbound} and \eqref{e:L1}, we have
\[ \|g\|_{X} \leq C \left(\|bS'+cS\|_{L^\infty(\R)}+ \|bS'+cS\|_{L^1(\R)}\right) \leq C_0 \delta,\]
since $S$ and $S'$ are bounded and $\|b+c\|_{X} \lesssim \delta$.
For the nonlinear term, since $F$ is $C^3$ on $[a_-, a_+]$, there is some $K>0$ such that 
\begin{equation}\label{e:taylor}
 |\mathcal N(S,\eta)| = |F'(S+\eta) - F'(S) - F''(S) \eta | \leq K \eta^2,
 \end{equation}
globally in $y$. This gives
\begin{equation}\label{e:N-norm}
\begin{split}
\left|\int_{\R} G(y,w) \mathcal N(S, \eta)(w) \dd w\right| &\leq K \int_{\R} |G(y,w)| \eta^2(w)\dd w
\end{split}
\end{equation}
and
\[ \begin{split}
\left|\int_{\R}\int_{\R} G(y,w) \mathcal N(S, \eta)(w) \dd w \dd y \right|&\leq K  \int_{\R}\left(e^{-my}\int_{-\infty}^y e^{mw} \eta^2(w)\dd w + e^{my}\int_{y}^\infty e^{-mw} \eta^2(w)\dd w\right)\dd y,
\end{split}\]
so that the estimates \eqref{e:Linfbound} and \eqref{e:L1} imply
\[ \left\| \int_\R G(y,w)\mathcal N(S,\eta)(w) \dd w \right\|_{X} \leq CK \|\eta^2\|_X \leq CK \|\eta\|_X^2,\]
after applying the standard interpolation $\|\cdot\|_{L^2}^2\leq \|\cdot\|_{L^\infty}\|\cdot\|_{L^1}$.

With $C_0$ such that $\|g\|_{X}\leq C_0 \delta$, define $\mathcal A := \{\eta \in X, \|\eta\|_{X} \leq 2C_0\delta\}$. For any $\eta \in \mathcal A$, the above estimates imply
\begin{align*}
\|\mathcal T \eta\|_{X} &= \left\|g - \int_{\R} G(\cdot,w) \mathcal N(S, \eta)(w)\dd w \right\|_{X} \leq \|g\|_{X} + CK\|\eta\|_{X}^2 \leq C_0\delta + C\delta^2, 
\end{align*}
so for $\delta < C_0/C$, we have $T\eta \in \mathcal A$. Next, for $\eta_1, \eta_2 \in \mathcal A$, we have from Taylor's Theorem that
\[ F'(S+\eta_1) = F'(S+\eta_2) + F''(S+\eta_1)(\eta_1-\eta_2) + \frac 1 2 F'''(\xi_y)(\eta_1-\eta_2)^2,\]
for some $\xi_y \in [a_-,a_+]$ depending on $y$. 
Using this in $\mathcal N(S,\eta_1) - \mathcal N(S,\eta_2)$, we have
\[
\begin{split}
 |\mathcal N(S,\eta_1) - \mathcal N(S,\eta_2)| &=| F'(S+\eta_1) - F'(S+\eta_2) - F''(S)(\eta_1-\eta_2)|\\
 &= \left|[F''(S+\eta_1) - F''(S)](\eta_1 - \eta_2) +\frac 1 2 F'''(\xi_y)(\eta_1 - \eta_2)^2\right|\\
 &\leq |\max_{[a_-,a_+]}|F'''(s)||\eta_1||\eta_1-\eta_2| + \frac 1 2 |F'''(\xi_y)|(\eta_1-\eta_2)^2\\
 &\leq K \delta|\eta_1 - \eta_2|,
 \end{split}
 \]
for some $K>0$. 
By \eqref{e:Linfbound} and \eqref{e:L1} we have
\begin{equation}\label{e:contraction}
\begin{split}
\|(\mathcal T \eta_1)(y) - (\mathcal T \eta_2)(y)\|_X &= \left\|\int_{\R} G(y,w) [\mathcal N(S, \eta_1) - \mathcal N(S,\eta_2)](w) \dd w\right\|_X\\
 &\leq CK \delta \|\eta_1-\eta_2\|_X,
\end{split}
\end{equation}
as above. 
The constant $CK>0$ depends on $m$ and the $C^3$ norm of $F$. For $\delta$ sufficiently small, we conclude $\mathcal T$ is a contraction on $\mathcal A$, and a unique solution $S_b$ to \eqref{e:integral} exists in $\mathcal A$.

To derive the bounds on $S_b'$, we differentiate equation \eqref{e:integral} and use the derivative bounds \eqref{e:derivative-bounds} and the Taylor estimate \eqref{e:taylor}:
\[ \begin{split} \|S_b'\|_{X} &= \left\|\partial_y \int_\R G(y,w) [ bS'+cS-N(S,S_b)]\dd w\right\|_{X}
\leq  \| bS'+cS-N(S,S_b)\|_{X} \lesssim \delta + K\|S_b^2\|_X \lesssim \delta. 
\end{split} \]
\end{proof}

The proof of Theorem \ref{t:exist} also provides the following approximation for $S_b = T-S$: since $S_b = g + \int_{\R} G(y,w) \mathcal N(S,S_b)(w) \dd w$, the estimates \eqref{e:Linfbound}, \eqref{e:L1} imply
\begin{equation}\label{e:Sb-1}
\begin{split}
\left\|S_b - \int_\R \tilde G(\cdot,w) [ bS'+cS](w)\dd w \right\|_{X} &\leq \left\| \int_\R \tilde G(\cdot,w) \mathcal N(S,S_b)(w)\dd w\right\|_{X}\\
&\lesssim \|S_b\|_{X}^2 \lesssim \delta^2,
\end{split}
\end{equation}
with $\tilde G = G$ if $0$ is not an eigenvalue of $\mathcal L_b = \mathcal L_S - b\partial_y - c$, and $\tilde G = G_\lambda$ otherwise, with $\lambda$ chosen such that $|\lambda|\lesssim \delta$, so that $\lambda S_b - \lambda\int_\R G_\lambda(y,w)S_b(w)\dd w$ are $O(\delta^2)$.

\section{Perturbation of the spectrum}\label{s:spectrum}

We consider the spectrum of
\begin{equation}\label{e:L}
\mathcal L_T := -\partial_y^2 - b\partial_y  - c + F''(T),
\end{equation}
where $T$ is the stationary solution guaranteed by Theorem \ref{t:exist}. Defining $d = - c + F''(T) - F''(S)$, we have $\mathcal L_T = \mathcal L_S - b \partial_y + d$. By the $C^3$ regularity of $F$, we have $|F''(T) - F''(S)|\leq K |S_b|$, and Theorem \ref{t:exist} implies $\|d\|_{L^1(\R)} + \|d\|_{L^\infty(\R)} \lesssim \delta$. With \eqref{e:Sb-1}, we can also write a first-order approximation for $d$ as follows:
\begin{equation}\label{e:d-expansion}
 d(y) = -c(y)  +F'''(S)(y) \int_\R \tilde G(y,w) [bS'+cS](w)\dd w + \eps(y),
 \end{equation}
with $\tilde G$ as in \eqref{e:Sb-1} and $\eps(y) = o(S_b(y))$.

Our goal is to investigate how the spectrum of $\mathcal L_S$ changes under the perturbation $- b\partial_y + d$. Since $\mathcal L_T$ is self-adjoint with respect to the inner product
\[ \langle f,g \rangle_\omega := \int_\R \omega f g \dd y,\]
with $\omega = \int_{-\infty}^y b(z) \dd z$, the spectrum $\sigma(\mathcal L_T)$ is real. Since the perturbation is relatively $\mathcal L$-compact, we have $\sigma_{ess}(\mathcal L_T) = \sigma_{ess}(\mathcal L_S)$. (See, e.g. \cite[Chapter 14]{hislop-sigal}.) 
Given our upper bounds on $b$ and $d$, it is standard that $\sigma(\mathcal L_T)$ lies in the $c_0\delta$ neighborhood of $\sigma(\mathcal L_S)$, for some $c_0>0$. (An elementary argument to this effect can be found in the proof of Theorem 3.1 in \cite{snelson2016stability}.) This already establishes part (a) of Theorem \ref{t:spectrum}.

To analyze the eigenvalue problem, we write the equation $(\mathcal L_S-\lambda) Y^\lambda = 0$ in vector form:
\[ (\mb Y^\lambda)'(y) = \left( \left(\begin{array}{cc} 0 & 1 \\ m^2-\lambda & 0\end{array}\right)+ \left(\begin{array}{cc} 0 & 0 \\ F''(S) - m^2 & 0\end{array}\right)\right)\mb Y^\lambda(y).\]
For any $\lambda \leq m^2$, Lemma \ref{l:system}(a) implies there exist $Y_\infty^\lambda, Y_{-\infty}^\lambda\in L^\infty_{\rm loc}(\R)$ 
satisfying $(\mathcal L_S -\lambda)Y_{\pm\infty}^\lambda = 0$, and
\begin{equation}\label{e:boundary}
 \lim_{y\to \pm \infty} e^{\pm k y} \mb Y_{\pm \infty}^\lambda(y) = \left(\begin{array}{c} 1\\ \mp k\end{array}\right),
 \end{equation}
with $k = \sqrt{m^2-\lambda}$. For $\lambda < m^2$, we also obtain the integral representations
\begin{equation}\label{e:Y-lambda}
\begin{split}
 e^{ky}\mb Y_{\infty}^\lambda &= \left(\begin{array}{c} 1 \\ -k\end{array}\right) - \frac 1 2 \int_y^\infty  (F''(S) - m^2) Y_\infty^\lambda(w) e^{kw}\left(\begin{array}{c} (e^{2k(y-w)}-1)/k \\ e^{2k(y-w)} +1\end{array}\right)\dd w\\
e^{-ky}\mb Y_{-\infty}^\lambda &= \left(\begin{array}{c} 1 \\ k\end{array}\right) - \frac 1 2 \int_{-\infty}^y  (F''(S) - m^2) Y_{-\infty}^\lambda(w) e^{-kw}\left(\begin{array}{c} (e^{2k(y-w)}-1)/k \\ e^{2k(y-w)} +1\end{array}\right)\dd w .
\end{split}
 \end{equation}

For the operator $\mathcal L_T$, we similarly apply Lemma \ref{l:system}(a) with $V = F''(S) - m^2 + d$ to obtain $U_{\pm \infty}^\lambda$ solving $(\mathcal L_T-\lambda) U_{\pm\infty}^\lambda = 0$, with the same boundary conditions \eqref{e:boundary}, and for $\lambda< m^2$, 
\begin{equation}\label{e:U-lambda}
\begin{split}
 e^{ky}\mb U_{\infty}^\lambda &= \left(\begin{array}{c} 1 \\ -k\end{array}\right) - \frac 1 2 \int_y^\infty  [(F''(S) - m^2+d) U_\infty^\lambda - b(U_\infty^\lambda)'] e^{kw}\left(\begin{array}{c} (e^{2k(y-w)}-1)/k \\ e^{2k(y-w)} +1\end{array}\right)\dd w\\
e^{-ky}\mb U_{-\infty}^\lambda &= \left(\begin{array}{c} 1 \\ k\end{array}\right) - \frac 1 2 \int_{-\infty}^y [ (F''(S) - m^2+d) U_{-\infty}^\lambda - b (U_{-\infty}^\lambda)']e^{-kw}\left(\begin{array}{c} (e^{2k(y-w)}-1)/k \\ e^{2k(y-w)} +1\end{array}\right)\dd w .
\end{split}
 \end{equation}

First, we prove a suitable approximation lemma for $Y_{\pm \infty}^\lambda$ and $U_{\pm \infty}^\lambda$ for nearby values of $\lambda$:

\begin{lemma}\label{l:approx}
Assume $\|d\|_{L^1(\R)} + \|b\|_{L^1(\R)} \leq 1$. For any compact subset $B$ of $(-\infty,m^2)$, there exists a constant $C>0$ such that for  any $\lambda_1, \lambda_2\in B$, there holds
\[\begin{split}
\|e^{k_1 y}\mb Y_{\infty}^{\lambda_1} - e^{k_2 y} \mb U_{\infty}^{\lambda_2}\|_{L^\infty([0,\infty),\R^2)} &\leq C(|\lambda_1 - \lambda_2| + \|d\|_{L^1(\R)} + \|b\|_{L^1(\R)}),\\
\|e^{-k_1 y}\mb Y_{-\infty}^{\lambda_1} - e^{-k_2 y} \mb U_{-\infty}^{\lambda_2}\|_{L^\infty((-\infty,0],\R^2)} &\leq C(|\lambda_1 - \lambda_2| +  \|d\|_{L^1(\R)} + \|b\|_{L^1(\R)}),
\end{split}\]
where $k_i = \sqrt{m^2 - \lambda_i}$. 
\end{lemma}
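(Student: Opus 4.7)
The plan is to work from the integral representations \eqref{e:Y-lambda} and \eqref{e:U-lambda}, subtract them, and close the resulting estimate via a Gr\"onwall inequality. I treat the $+\infty$ case; the $-\infty$ case is analogous (indeed, it follows by reflecting $y\mapsto -y$). Write $\Phi(y) := e^{k_1 y}\mb Y_{\infty}^{\lambda_1}(y) - e^{k_2 y}\mb U_{\infty}^{\lambda_2}(y)$ for $y\geq 0$.

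The first step is to establish a uniform a priori bound $M = M(B)$ such that $\|e^{ky}\mb Y_{\infty}^\lambda\|_{L^\infty([0,\infty))}$ and $\|e^{ky}\mb U_{\infty}^\lambda\|_{L^\infty([0,\infty))}$ are both bounded by $M$ for every $\lambda \in B$. Since $B \subset (-\infty,m^2)$ is compact, $k = \sqrt{m^2-\lambda}$ ranges in a compact subset of $(0,\infty)$, so the kernel vectors $((e^{2k(y-w)}-1)/k,\, e^{2k(y-w)}+1)^T$ are uniformly bounded for $y \leq w$. Combined with integrability of $|F''(S)-m^2|$ (which decays like $e^{-m|y|}$ by Lemma \ref{l:constant-speed}) and of $b, d$ (by hypothesis), the right-hand sides of \eqref{e:Y-lambda} and \eqref{e:U-lambda} define contractions on $L^\infty([y_0,\infty),\R^2)$ for $y_0$ large enough, yielding the bound there; the extension to $[0,y_0]$ follows from continuous dependence of the ODE flow on parameters (Lemma \ref{l:system}). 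The same constant $M$ bounds the second components $e^{ky}(Y_\infty^\lambda)'$ and $e^{ky}(U_\infty^\lambda)'$, which is crucial for handling the $b(U_\infty^{\lambda})'$ term.

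Next I would subtract \eqref{e:U-lambda} from \eqref{e:Y-lambda}. The boundary term contributes $(0,k_2-k_1)^T$, of norm $\lesssim |\lambda_1 - \lambda_2|$. The integrand difference splits into four pieces: (i) $(F''(S)-m^2)\,[e^{k_1 w}Y_{\infty}^{\lambda_1}(w) - e^{k_2 w}U_{\infty}^{\lambda_2}(w)]$ against the $k_1$-kernel, controlled pointwise by $C|F''(S)(w)-m^2|\,\|\Phi(w)\|$; (ii) $(F''(S)-m^2)\,e^{k_2 w}U_{\infty}^{\lambda_2}(w)$ against the \emph{difference} of $k_1$- and $k_2$-kernels, controlled by $CM|F''(S)(w)-m^2|\,|\lambda_1-\lambda_2|$ using that $k\mapsto$ kernel is Lipschitz in $k$ uniformly over $B$; (iii), (iv) the extra terms $-d\,e^{k_2 w}U_{\infty}^{\lambda_2}(w)$ and $b\,e^{k_2 w}(U_{\infty}^{\lambda_2})'(w)$ against the $k_2$-kernel, each bounded by $CM(|d(w)|+|b(w)|)$. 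Integrating over $[y,\infty)$ gives
\[ \|\Phi(y)\| \leq C_1\bigl(|\lambda_1-\lambda_2|+\|d\|_{L^1(\R)}+\|b\|_{L^1(\R)}\bigr) + C_2\int_y^\infty |F''(S)(w)-m^2|\,\|\Phi(w)\|\,\dd w. \]

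The final step is Gr\"onwall's inequality applied to the backward integral (legitimate because step one already gives $\|\Phi\|_{L^\infty([0,\infty))} < \infty$), yielding
\[ \|\Phi\|_{L^\infty([0,\infty))} \leq C_1\bigl(|\lambda_1-\lambda_2|+\|d\|_{L^1(\R)}+\|b\|_{L^1(\R)}\bigr)\exp\!\Bigl(C_2\int_0^\infty |F''(S)(w)-m^2|\,\dd w\Bigr), \]
with the exponential factor finite by Lemma \ref{l:constant-speed}. The main technical point is securing the uniform a priori bound at step one together with the corresponding bound on $(U_{\infty}^{\lambda})'$; once these are in hand, the four-piece decomposition plus Gr\"onwall is routine, and the $-\infty$ bound follows by symmetry.
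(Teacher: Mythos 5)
Your proposal is correct and follows essentially the same route as the paper: subtract the integral representations \eqref{e:Y-lambda} and \eqref{e:U-lambda}, bound the boundary and kernel-difference contributions by $|\lambda_1-\lambda_2|$ (mean value theorem in $k$, absorbed by the exponential decay of $F''(S)-m^2$), bound the $b$ and $d$ contributions by their $L^1$ norms using the uniform bound on $e^{k w}\mb U^{\lambda}_{\infty}$, and close the resulting Volterra-type inequality. The only cosmetic difference is that you close with Gr\"onwall plus an a priori bound, whereas the paper invokes Lemma \ref{l:volterra} (whose Neumann-series bound is the same mechanism), and your a priori bound in step one can be obtained directly from Lemma \ref{l:system}(a) with constants uniform over the compact set $B$, rather than via a separate contraction-plus-continuation argument.
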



\begin{proof}
We prove only the first estimate, as the second follows by a similar argument. 

From \eqref{e:Y-lambda} and \eqref{e:U-lambda} we have 
 \[
 \begin{split}
  e^{k_1 y} \mb Y_{\infty}^{\lambda_1}&(y) - e^{k_2y} \mb U_\infty^{\lambda_2}(y)\\
   &= \left(\begin{array}{c} 0 \\ k_2-k_1\end{array}\right) - \frac 1 2 \int_y^\infty \left[ (F''(S) - m^2) \left(\begin{array}{c} (e^{2k_1(y-w)}-1)/k_1 \\ e^{2k_1(y-w)} +1\end{array}\right)  Y_\infty^{\lambda_1} (w) e^{k_1w} \right.\\
&\qquad\qquad \left.- \left(\begin{array}{c} (e^{2k_2(y-w)}-1)/k_2 \\ e^{2k_2(y-w)} +1\end{array}\right) [(F''(S)-m^2 + d) U_\infty^{\lambda_2}(w) - b(U_\infty^{\lambda_2})' e^{k_2w}] \right]\dd w \\
  &= \mb J_1(y) + \mb J_2(y)\\
  &\quad  - \frac 1 2 \int_y^\infty (F''(S) - m^2) \left(\begin{array}{c} (e^{2k_1(y-w)}-1)/k_1 \\ e^{2k_1(y-w)} +1\end{array}\right) (e^{k_1w} Y_\infty^{\lambda_1}(w) - e^{k_2w} U_\infty^{\lambda_2}(w))\dd w,
 \end{split}
 \]
 where 
\[\begin{split}
 \mb J_1(y) &: =  \left(\begin{array}{c} 0 \\ k_2-k_1\end{array}\right)\\
 &\quad  - \frac 1 2 \int_y^\infty (F''(S) - m^2) e^{k_2w} U_\infty^{\lambda_2}(w)\left(\begin{array}{c} (e^{2k_1(y-w)}-1)/k_1 - (e^{2k_2(y-w)} - 1)/k_2\\ e^{2k_1(y-w)} - e^{2k_2(y-w)}\end{array}\right) \dd w\\
 \mb J_2(y) &:= \frac 1 2 \int_y^\infty \left(\begin{array}{c} (e^{2k_2(y-w)}-1)/k_2 \\ e^{2k_2(y-w)} +1\end{array}\right)[d U_\infty^{\lambda_2} - b(U_\infty^{\lambda_2})'] e^{k_2w} \dd w.
 \end{split} \]

Since $y-w \leq 0$, the mean value theorem applied to $x\mapsto e^{2x(y-w)}$ and $x\mapsto (e^{2x(y-w)}-1)/x$ implies, after a straightforward calculation,  the inequalities
\[ \left| \left(\begin{array}{c} (e^{2k_1(y-w)}-1)/k_1 - (e^{2k_2(y-w)} - 1)/k_2\\ e^{2k_1(y-w)} - e^{2k_2(y-w)}\end{array}\right) \right| \leq  C(1+|y-w|)|k_1-k_2|,\]
for a constant $C$ depending on $B$. Since $|F''(S) - m^2|\leq e^{-m|w|}$ and $e^{k_2w} U_\infty^{\lambda_2}(w)$ is uniformly bounded on $[0,\infty)$, we therefore have $\|\mb J_1\|_{L^\infty([0,\infty),\R^2)} \leq C|k_1-k_2|$.

For $\mb J_2$, since $e^{k_2 w} \mb U_\infty^{\lambda_2}$ is bounded on $[0,\infty)$, we have $\|\mb J_2\|_{L^\infty([0,\infty),\R^2)} \leq C\|d + b\|_{L^1(\R)}$, for a constant depending only on $k_2$.  

Define the integral kernel 
\[K(y,w) = -\frac 1 2 (F''(S) - m^2)\left(\begin{array}{cc} (e^{2k_1(y-w)}-1)/k_1 & 0\\ e^{2k_1(y-w)} +1 & 0\end{array}\right),\]
From the exponential decay of $F''(S) - m^2$ 
we see that
\[ \int_0^\infty \sup_{0<y<w} \|K(y,w)\| \dd w \]
is bounded by a constant depending only on $k_1$ and $m$.  Lemma \ref{l:volterra} then implies
\begin{equation}\label{e:Ylambda-Y0}
\|e^{k_1y}\mb Y_{\infty}^{\lambda_1}(y) - e^{k_2y} \mb U_\infty^{\lambda_2}(y)\|_{L^\infty([0,\infty),\R^2)} \leq C \|\mb J_1 + \mb J_2\|_{L^\infty([0,\infty),\R^2)} \leq C(|k_1-k_2| + \|d\|_{L^1(\R)} + \|b\|_{L^1(\R)}).
\end{equation}
Since $|k_1 - k_2| \leq C|\lambda_1 - \lambda_2|$ for a constant depending only on $K$, the proof is complete.
\end{proof}

Now, we are ready to derive a result that governs the direction in which eigenvalues of $\mathcal L_S$ drift under the perturbation:

\begin{theorem}\label{t:eigenvalue}
Assume $\|d\|_{L^1(\R)} + \|b\|_{L^1(\R)} \leq \delta \ll 1$. For any eigenvalue $\lambda_*< m^2$ of $\mathcal L_S$ with eigenfunction $Y_{*}$, there exists a simple, real eigenvalue $\lambda$ of $\mathcal L_T$ with  $|\lambda - \lambda_*| \leq C\delta$. Furthermore, we have the following expansion for $\lambda$:
\[ \lambda =  \lambda_* + \frac {\int_{\R} Y_* [dY_* - bY_*'] \dd y}{\int_{\R} (Y_*)^2 \dd y} + O(\delta^2).\]
In particular, if
\[ A := \int_{\R} Y_* [d Y_*- bY_*'] \dd y \neq 0,\]
then $\lambda - \lambda_*$ has the same sign as $A$.
\end{theorem}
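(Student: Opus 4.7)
The plan is to produce the perturbed eigenvalue via an Evans-function / Wronskian argument and then extract its first-order behavior by pairing the perturbed eigenvalue equation against $Y_*$ in the unweighted $L^2$ inner product. Reality of $\lambda$ is automatic, since $\mathcal L_T$ is self-adjoint with respect to the $\omega$-weighted inner product.

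For existence and simplicity, I would define the Jost-type determinants
\[ E_T(\lambda) := \det\bigl(\mathbf U_\infty^\lambda(0),\,\mathbf U_{-\infty}^\lambda(0)\bigr), \qquad E_S(\lambda) := \det\bigl(\mathbf Y_\infty^\lambda(0),\,\mathbf Y_{-\infty}^\lambda(0)\bigr), \]
for $\lambda$ in a real neighborhood of $\lambda_*$ inside $(-\infty,m^2)$. A value $\lambda$ is an eigenvalue of $\mathcal L_T$ (resp.\ $\mathcal L_S$) precisely when $E_T(\lambda) = 0$ (resp.\ $E_S(\lambda) = 0$), since this amounts to the two exponentially decaying branches at $\pm\infty$ being linearly dependent and hence producing an $L^2$ eigenfunction. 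Both functions are analytic in $\lambda$ on $(-\infty,m^2)$, which follows from Neumann-series solvability of the Volterra equations \eqref{e:Y-lambda}, \eqref{e:U-lambda}. By the classical theory of one-dimensional Schr\"odinger operators, the simple eigenvalue $\lambda_*$ satisfies $E_S(\lambda_*) = 0$ and $E_S'(\lambda_*) \neq 0$. Lemma \ref{l:approx}, evaluated at $y = 0$ with $\lambda_1 = \lambda_2 = \lambda$, gives $|E_T(\lambda) - E_S(\lambda)| \leq C\delta$ uniformly for $\lambda$ near $\lambda_*$, and a parallel argument bounds $|E_T'(\lambda) - E_S'(\lambda)|$. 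The implicit function theorem (or Rouch\'e) then produces a unique simple real zero $\lambda$ of $E_T$ with $|\lambda - \lambda_*| \leq C\delta$.

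Once $\lambda$ is located, let $Y$ denote the corresponding eigenfunction, normalized so that $e^{k y}Y(y) \to 1$ as $y\to\infty$, where $k = \sqrt{m^2 - \lambda}$; analogously normalize $Y_*$ with $k_* = \sqrt{m^2 - \lambda_*}$. Lemma \ref{l:approx} applied with $(\lambda_1, \lambda_2) = (\lambda_*, \lambda)$ yields
\[ \|e^{k y}Y - e^{k_* y}Y_*\|_{L^\infty([0,\infty))} + \|e^{-k y}Y - e^{-k_* y}Y_*\|_{L^\infty((-\infty,0])} \leq C\delta, \]
together with analogous bounds on the derivative components. Combined with $|k - k_*| \leq C\delta$ and the exponential decay of $Y$, $Y_*$, and their derivatives, this upgrades to $\|Y - Y_*\|_{L^2(\R)} + \|Y' - Y_*'\|_{L^2(\R)} \leq C\delta$.

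For the first-order formula, I would write $\mathcal L_T = \mathcal L_S - b\partial_y + d$ and pair $\mathcal L_T Y = \lambda Y$ with $Y_*$ in the unweighted $L^2$ inner product. Since $\mathcal L_S$ is self-adjoint on $L^2(\R)$ and $\mathcal L_S Y_* = \lambda_* Y_*$, the $\mathcal L_S$-term reduces to $\lambda_* \langle Y, Y_* \rangle$, leaving
\[ (\lambda - \lambda_*)\langle Y, Y_* \rangle = \int_\R Y_*\bigl(dY - b Y'\bigr)\, \dd y. \]
Using the $L^2$ closeness from the previous step, $\|b\|_{L^\infty} + \|d\|_{L^\infty} \leq C\delta$, and Cauchy--Schwarz, the right-hand side equals $\int_\R Y_*(dY_* - bY_*')\, \dd y + O(\delta^2)$, while $\langle Y, Y_* \rangle = \|Y_*\|_{L^2}^2 + O(\delta)$, yielding the claimed expansion; the sign statement follows when $A \neq 0$ by taking $\delta$ small. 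The main obstacle is the existence/simplicity step---verifying $E_S'(\lambda_*) \neq 0$ and controlling $E_T$ with enough $\lambda$-regularity to invoke the implicit function theorem, while accounting for the Abel factor $\exp(\int_0^y b\, \dd z)$ that arises because $\mathcal L_T$ is not in Sturm--Liouville form. Once the candidate $(\lambda, Y)$ is produced, the first-order computation is essentially routine.
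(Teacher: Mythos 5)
Your proposal is sound in outline and reaches the same formula, but the second half is genuinely different from the paper. The paper never pairs the perturbed eigenvalue equation against $Y_*$; instead it expands the Wronskian $W_{\mb U}(\lambda,0)=\det(\mb U_\infty^\lambda(0),\mb U_{-\infty}^\lambda(0))$ directly, using Lemma \ref{l:approx} to write $\mb U_{\pm\infty}^\lambda = e^{\pm(k_*-k)y}\mb Y_{\pm\infty}^{\lambda_*}+e^{\mp ky}\mb E_{\pm\infty}$ and the identity $\det(\mb Y_{\pm\infty}^{\lambda_*},\mb U_{\mp\infty}^\lambda)'= Y_{\pm\infty}^{\lambda_*}[(d+\lambda_*-\lambda)U_{\mp\infty}^\lambda - b(U_{\mp\infty}^\lambda)']$, which yields $W_{\mb U}(\lambda,0)=c_+c_-\int[(d+\lambda_*-\lambda)Y_*^2-bY_*Y_*']\dd w + O(\delta^2)$; setting this to zero gives existence (sign change in $\lambda$ plus continuity) and the expansion simultaneously, with no need for an eigenfunction-closeness step or for $\lambda$-derivatives of the Evans function. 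Your route buys a cleaner conceptual split (locate the zero, then a Rayleigh-quotient computation), and the pairing computation itself is correct; the paper's route buys economy, since the Wronskian expansion is the only estimate needed.

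Two points in your version need more care than you give them. First, the bound $\|e^{-ky}Y-e^{-k_*y}Y_*\|_{L^\infty((-\infty,0])}\le C\delta$ does not follow directly from Lemma \ref{l:approx}: with your normalization $Y=U_\infty^\lambda$, $Y_*=Y_\infty^{\lambda_*}$, on the left half-line the lemma controls $U_{-\infty}^\lambda$ versus $Y_{-\infty}^{\lambda_*}$, which equal $Y$ and $Y_*$ only up to connection coefficients; you must show these coefficients are $\delta$-close (e.g.\ by matching data at $y=0$, where both Jost pairs are $\delta$-close and $\mb Y_{\pm\infty}^{\lambda_*}(0)$ are parallel and nonzero). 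This is exactly the issue the paper sidesteps by keeping the factors $c_\pm$ explicit. Second, your existence step via the implicit function theorem or Rouch\'e requires either complex analyticity of $E_T$ in $\lambda$ or a bound on $|E_T'-E_S'|$, neither of which is provided by the lemmas as stated; this is avoidable, since $E_T$ is continuous in $\lambda$ (from the Volterra construction), $|E_T-E_S|\le C\delta$, and $E_S$ changes sign linearly across $\lambda_*$ (as $E_S'(\lambda_*)\ne 0$), so the intermediate value theorem already produces a zero in $[\lambda_*-M\delta,\lambda_*+M\delta]$, with simplicity and reality of the eigenvalue coming from self-adjointness in $\langle\cdot,\cdot\rangle_\omega$ and the second-order ODE structure, as in the paper. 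With those two repairs your argument goes through.
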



\begin{remark}
Using the formula \eqref{e:d-expansion}, one can show that $A$ has the same sign as
\begin{equation}\label{e:formula1}
  \int_{\R} \left[Y_*^2(y)\left(-c(y) + F'''(S)(y) \int_\R \tilde G(y,w)[bS'+cS](w)\dd w\right) - b(y)Y_*'(y)Y_*(y)\right] \dd y    .
 \end{equation}
\end{remark}

\begin{proof}
With $Y_{\pm\infty}^{\lambda_*}$ solving \eqref{e:Y-lambda}, since $\lambda_*$ is a simple eigenvalue, we have $Y_{\pm\infty}^{\lambda_*} = c_\pm Y_*$, for constants $c_\pm$. Let $k_* = \sqrt{m^2-\lambda_*}$. From our construction, it is clear that $Y_*$ decays exponentially at a rate $Y_*(y) \lesssim e^{-k_* |y|}$.

For $\lambda$ near $\lambda_*$ 
let $k=\sqrt{m^2-\lambda}$ and let $U_{\pm \infty}^\lambda$ be the solutions to \eqref{e:U-lambda} as above. From Lemma \ref{l:approx} and our assumption that $\|d\|_{L^1(\R)} + \|b\|_{L^1(\R)} \lesssim \delta$, 
 we can write
 \begin{equation}\label{e:E12}
 \begin{split}
 \mb U_{\pm \infty}^\lambda(y) &= e^{\pm(k_*-k)y} \mb Y_{\pm\infty}^{\lambda_*}(y) + e^{\mp k y} \mb E_{\pm \infty}(y),
 \end{split}
 \end{equation}
 with $\|\mb E_{\infty}\|_{L^\infty([0,\infty),\R^2)}+\|\mb E_{-\infty}\|_{L^\infty((-\infty,0],\R^2)}\lesssim \delta$. 
Denote the Wronskian
\[ W_{\mb U}(\lambda,y) = \det(\mb U_\infty^\lambda(y), \mb U_{-\infty}^\lambda(y)).\] 
By Abel's formula, $\exp(\int_{-\infty}^y b(z) \dd z) W_{\mb U}(\lambda,y)$ is independent of $y$. We focus on $y=0$ and apply \eqref{e:E12} to obtain 
\begin{equation}\label{e:WU}
 \begin{split}
W_{\mb U}(\lambda,0) &= \det(\mb Y_\infty^{\lambda_*}(0) + \mb E_\infty(0), \mb Y_{-\infty}^{\lambda_*}(0)+\mb E_{-\infty}(0))\\
&= \det(\mb Y_\infty^{\lambda_*}(0), \mb E_{-\infty}(0))+ \det( \mb E_{\infty}(0),\mb Y_{-\infty}^{\lambda_*}(0)) + \det(\mb E_\infty(0), \mb E_{-\infty}(0))\\
&= \det(\mb Y_\infty^{\lambda_*}(0), \mb U_{-\infty}^\lambda(0)) + \det(\mb U_\infty^\lambda(0), \mb Y_{-\infty}^{\lambda_*}(0)) + O(\delta^2).
\end{split}
\end{equation}
In the second line, we used that $\mb Y_\infty^{\lambda_*}$ and $\mb Y_{-\infty}^{\lambda_*}$ are parallel, and in the last line, we used $\mb E_{\pm\infty}(0) = \mb U_{\pm\infty}^\lambda(0) - \mb Y_{\pm \infty}^{\lambda_*}(0)$ and $|\mb E_{\pm \infty}(0)| \lesssim \delta$. Since 
\[\det(\mb Y_{\pm\infty}^{\lambda_*}, \mb U_{\mp\infty}^\lambda)'(y) = Y_{\pm\infty}^{\lambda_*} \left[(d+\lambda_*-\lambda)  U_{\mp\infty}^\lambda - b (U_{\mp\infty}^\lambda)'\right],\]
we can use \eqref{e:E12} again to write
\[ 
\begin{split}
\det( \mb Y_\infty^{\lambda_*}(0), \mb U_{-\infty}^\lambda(0)) &= \int_{-\infty}^0 Y_\infty^{\lambda_*}\left[(d + \lambda_*-\lambda)  U_{-\infty}^\lambda - b (U_{-\infty}^\lambda)'\right] \dd w\\
& = \int_{-\infty}^0  e^{(k-k_*)w} Y_\infty^{\lambda_*}\left[(d+\lambda_*-\lambda)  Y_{-\infty}^{\lambda_*} - b (Y_{-\infty}^{\lambda_*})'\right]\dd w \\
&\quad  +\int_{-\infty}^0 e^{kw} Y_\infty^{\lambda_*}(w) [(d+\lambda_*-\lambda) E_{-\infty}^1 - b E_{-\infty}^2]\dd w,
\end{split}
\]
and 
\[ \begin{split}
\det(\mb U_\infty^\lambda,\mb Y_{-\infty}^{\lambda_*})(0) &= \int_{0}^\infty Y_{-\infty}^{\lambda_*}\left[(d+\lambda_*-\lambda)  U_{\infty}^\lambda - b (U_{\infty}^\lambda)'\right] \dd w\\
& = \int_0^\infty  e^{(k_*-k)w} Y_{-\infty}^{\lambda_*}\left[(d + \lambda_*-\lambda)  Y_{\infty}^{\lambda_*} - b (Y_{\infty}^{\lambda_*})'\right]\dd w \\
&\quad  +\int_0^\infty e^{-kw} Y_{-\infty}^{\lambda_*}(w) [(d+\lambda_*-\lambda) E_{\infty}^1 - b E_{\infty}^2]\dd w.
\end{split}
\]
Feeding these expressions into \eqref{e:WU}, we obtain
\[ 
W_{\mb U}(\lambda,0) = c_+c_-\int_{-\infty}^\infty  e^{(k_*-k)|w|} [(d+\lambda_*-\lambda)(Y_*(w))^2 - b Y_*(w)Y_*'(w)]\dd w + O(\delta^2).
\]
From the approximation $|e^{(k_*-k)|w|} - 1 | \leq |k_*-k||w| e^{|k_*-k||w|}$ and the exponential decay of $Y_*$ 
we have, for $\lambda$ an eigenvalue of $\mathcal L_T$,
 \[ 
 0 = e^{\int_{-\infty}^y b(z) \dd z} c_+ c_- \int_{-\infty}^\infty[(d+\lambda_*-\lambda)(Y_*(w))^2 - b Y_*(w)Y_*'(w)]\dd w + O(\delta^2),\]
 which implies the first-order expansion for $\lambda$ in the statement of the theorem.
\end{proof}

Now, we analyze the threshold resonance $R$, which is an $L^\infty$ function solving $\mathcal L_S R- m^2 R=0$. First, we prove a modified version of Lemma \ref{l:approx} for the borderline case $m^2$. This will be useful in tracking how a threshold resonance of $\mathcal L_S$ translates to the spectrum of $\mathcal L_T$. Writing $(\mathcal L_S- m^2)Y^\lambda = 0$ in vector form as above, Lemma \ref{l:system}(b) implies
\begin{equation}\label{e:Y-m}
  \mb Y_\infty^{m^2}(y) = \left(\begin{array}{c} 1\\  0\end{array}\right)  -  \int_{y}^\infty (F''(S) - m^2)Y_\infty^{m^2} \left(\begin{array}{c} y-w\\ 1 \end{array}\right)\dd w.
  \end{equation}

 \begin{lemma}\label{l:approx-r}
Assume $\|d\|_{L^1(\R)} + \|b\|_{L^1(\R)} \leq 1$. For any $\lambda_0 < m^2$, there exists a constant $C>0$ such that for  any $\lambda\in (\lambda_0,m^2)$, there holds
\[\begin{split}
\|\mb Y_{\infty}^{m^2} - e^{k y} \mb U_{\infty}^{\lambda}\|_{L^\infty([0,\infty),\R^2)} &\leq C(k + \|d\|_{L^1(\R)} + \|b\|_{L^1(\R)}),\\
\|\mb Y_{-\infty}^{m^2} - e^{-k y} \mb U_{-\infty}^{\lambda}\|_{L^\infty((-\infty,0],\R^2)} &\leq C(k +  \|d\|_{L^1(\R)} + \|b\|_{L^1(\R)}),
\end{split}\]
where $k = \sqrt{m^2 - \lambda}$.
\end{lemma}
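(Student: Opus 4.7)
The plan is to adapt the proof of Lemma~\ref{l:approx} to the degenerate endpoint $\lambda = m^2$, where the two bounded-at-infinity solutions of $-Y'' + (F''(S) - m^2) Y = 0$ collapse (one tends to a constant, the other grows linearly). I would subtract the integral representations \eqref{e:Y-m} (for $\mb Y_\infty^{m^2}$) and \eqref{e:U-lambda} (for $e^{ky}\mb U_\infty^\lambda$), and express $\mathbf{D}(y) := \mb Y_\infty^{m^2}(y) - e^{ky}\mb U_\infty^\lambda(y)$ as an inhomogeneous Volterra equation on $[0,\infty)$,
\[
\mathbf{D}(y) = \mathbf{J}(y) + \int_y^\infty K(y,w)\,\mathbf{D}(w)\,dw,
\]
whose kernel $K$ inherits the exponentially decaying factor $F''(S)(w) - m^2$ from the unperturbed side, and where $\mathbf{J}$ collects everything not involving $\mathbf{D}$ itself.

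The inhomogeneity $\mathbf{J}$ has three sources, each of which I would estimate separately. First, the mismatched boundary values $(1,0)^T - (1,-k)^T = (0,k)^T$ contribute $O(k)$ directly. Second, the difference between the exponential kernel $-\tfrac{1}{2}\bigl((e^{2k(y-w)}-1)/k,\,e^{2k(y-w)}+1\bigr)^T$ from \eqref{e:U-lambda} and its $k\to 0$ limit $-(y-w,\,1)^T$ from \eqref{e:Y-m} satisfies, via Taylor expansion of $e^x$ at $0$ and for $w \geq y \geq 0$, $k \in (0,\sqrt{m^2-\lambda_0}\,]$,
\[
\left|\tfrac{e^{2k(y-w)} - 1}{2k} - (y-w)\right| \leq Ck(w-y)^2, \qquad \left|\tfrac{e^{2k(y-w)} + 1}{2} - 1\right| \leq Ck(w-y).
\]
Integrated against $(F''(S)(w)-m^2)\,Y_\infty^{m^2}(w)$, whose exponential decay $|F''(S(w))-m^2| \lesssim e^{-m|w|}$ (from Lemma~\ref{l:constant-speed} and the $C^1$ regularity of $F''$) absorbs the polynomial factors $(w-y)$ and $(w-y)^2$, this contributes $O(k)$. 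Third, the $d$ and $b$ perturbation terms present in \eqref{e:U-lambda} but absent from \eqref{e:Y-m} contribute $O(\|d\|_{L^1} + \|b\|_{L^1})$; to control these one first applies Lemma~\ref{l:volterra} to \eqref{e:U-lambda} itself to obtain bounds on $e^{kw}U_\infty^\lambda$ and $e^{kw}(U_\infty^\lambda)'$ on $[0,\infty)$ that are uniform in $\lambda \in (\lambda_0, m^2)$.

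Once $\|\mathbf{J}\|_{L^\infty([0,\infty),\R^2)} \leq C(k + \|d\|_{L^1} + \|b\|_{L^1})$ is established, the exponential decay of $F''(S) - m^2$ again ensures $\int_0^\infty \sup_{0 \leq y \leq w}\|K(y,w)\|\,dw < \infty$, so a final application of Lemma~\ref{l:volterra} yields the first stated bound. The second estimate, on $(-\infty,0]$, follows from the symmetric argument with $\mb Y_{-\infty}^{m^2}$ and $\mb U_{-\infty}^\lambda$. The main obstacle compared with Lemma~\ref{l:approx} is precisely the polynomial growth $(w-y)$, $(w-y)^2$ introduced by expanding the kernel about $k=0$: on the $F''(S)-m^2$ side it is tamed by the exponential decay of the coefficient, while on the $b,d$ side one must track constants carefully to keep them uniform in $k \in (0,\sqrt{m^2-\lambda_0}\,]$ and in particular free of any inverse power of $k$.
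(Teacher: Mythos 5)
Your argument is essentially the paper's proof: you subtract \eqref{e:Y-m} from \eqref{e:U-lambda}, place the boundary mismatch $(0,k)^T$, the Taylor-expanded kernel difference (absorbed by the exponential decay of $F''(S)-m^2$), and the $b,d$ perturbation terms into the inhomogeneity, and close with Lemma \ref{l:volterra}, which is exactly the paper's decomposition into $\mb J_1$, $\mb J_2$ plus a Volterra kernel satisfying \eqref{e:mu}. The uniformity-in-$k$ issue you flag for the $b,d$ contribution is treated no more explicitly in the paper, which simply reuses the $\mb J_2$ bound from Lemma \ref{l:approx}, so your proposal matches the paper's approach and level of detail.
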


\begin{proof}
The proof is similar to Lemma \ref{l:approx}, with the difference that $\mb Y_\infty^{m^2}$ satisfies the modified integral equation \eqref{e:Y-m}. From \eqref{e:Y-m} and \eqref{e:U-lambda}, we have
 \[
 \begin{split}
  \mb Y_{\infty}^{m^2}(y) - e^{ky} \mb U_\infty^{\lambda}(y) &= \left(\begin{array}{c} 0 \\ k\end{array}\right) - \frac 1 2 \int_y^\infty \left[ 2(F''(S) - m^2) \left(\begin{array}{c} y-w \\ 1\end{array}\right)  Y_\infty^{m^2} (w) \right.\\
&\qquad\qquad \left.- \left(\begin{array}{c} (e^{2k(y-w)}-1)/k \\ e^{2k(y-w)} +1\end{array}\right) [(F''(S)-m^2 + d) U_\infty^{\lambda}(w) - b(U_\infty^{\lambda})' e^{kw}] \right]\dd w \\
  &= \mb J_1(y) + \mb J_2(y) -  \frac 1 2 \int_y^\infty (F''(S) - m^2)\left(\begin{array}{c}  (e^{2k(y-w)} - 1)/k\\e^{2k(y-w)} + 1\end{array}\right) (Y_\infty^{m^2}(w) - e^{kw} U_\infty^{\lambda}(w))\dd w,
 \end{split}
 \]
 with  
 \[
 \mb J_1(y) : =  \left(\begin{array}{c} 0 \\ k\end{array}\right) - \frac 1 2 \int_y^\infty (F''(S) - m^2) Y_\infty^{m^2}(w)\left(\begin{array}{c} 2(y-w) - (e^{2k(y-w)} - 1)/k\\ 2 - (e^{2k(y-w)} + 1)\end{array}\right) \dd w,
 \]
 and $\mb J_2(y)$ defined as in the proof of Lemma \ref{l:approx}, with $\lambda$ replacing $\lambda_2$.
 
  We claim that $\|\mb J_1\|_{L^\infty([0,\infty),\R^2)} \leq C|k|$. Indeed, applying the mean value theorem to $f(x) = e^{2x(y-w)}$ gives
\[|f(k) - f(0)| \leq |k|\sup_{0<x<k} |f'(x)| \leq |k| 2|y-w|e^{2x(y-w)} \leq |k|2|y-w|,\]
 or $|e^{2k(y-w)} - 1|\leq 2|k||y-w|$. Next, Taylor's Theorem implies $f(k) = f(0) + f'(0)k + \eps$, with $|\eps| \leq \frac 1 2 k^2 \sup_{0<x<k} |f''(x)|$. We have $f'(0) = 2(y-w)$ and $f''(x) = 4(y-w)^2 e^{2x(y-w)} \leq 4(y-w)^2$, since $y-w<0$. This gives $e^{2k(y-w)} - 1 = 2k(y-w)  + \eps$, with  $|\eps|\leq 2k^2(y-w)^2$, or \[|(e^{2k(y-w)} - 1)/k -2(y-w)|  \leq 2 k(y-w)^2.\] Plugging these inequalities into the definition of $\mathbf J_1$ and using decay of $F''(S) - m^2$ gives the desired estimate.
 
 The same calculation as in the proof of Lemma \ref{l:approx} implies that 
 the boundedness property \eqref{e:mu} is satisfied for this integral equation, 
 and Lemma \ref{l:volterra} establishes the conclusion of the lemma.
\end{proof}

In the following theorem, we make the (mild) assumption that the limits at $\pm\infty$ of $R(y)$ are nonzero.

\begin{theorem}
\textup{(a)} Assume that $m^2$ is a simple resonance for $\mathcal L_S$, i.e. that there exists $R \in L^\infty(\R) \setminus L^2(\R)$ with $\mathcal L_S R= m^2 R$. Then there exists $\delta>0$ depending only on the function $R$, such that if $\|b\|_{L^1(\R)} + \|d\|_{L^1(\R)} \lesssim \delta$ and
\[ \int_{\R} R [dR - b R']\dd w <0,\]
then there exists a discrete eigenvalue $\lambda$ of $\mathcal L_T$ with $0 < m^2 - \lambda < C\delta$. If  
\[ \int_{\R} R [dR - b R']\dd w >0,\]
then there is no discrete eigenvalue of $\mathcal L_T$ in a neighborhood of the essential spectrum, i.e. the discrete spectrum $\sigma_d(\mathcal L_T)$ consists of the same number of eigenvalues as $\sigma_d(\mathcal L_S)$. 

\textup{(b)} On the other hand, if $m^2$ is nonresonant and not an eigenvalue of $\mathcal L_S$, then for $\delta$ is sufficiently small, $m^2$ cannot be a resonance or an eigenvalue of $\mathcal L_T$, and there is no eigenvalue of $\mathcal L_T$ in a neighborhood of the essential spectrum.
\end{theorem}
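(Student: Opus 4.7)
The plan is to adapt the Wronskian strategy from the proof of Theorem \ref{t:eigenvalue}. A discrete eigenvalue $\lambda\in(-\infty,m^2)$ of $\mathcal L_T$ corresponds exactly to a zero of the Wronskian $W_{\mathbf U}(\lambda,0):=\det(\mathbf U_\infty^\lambda(0),\mathbf U_{-\infty}^\lambda(0))$, since Abel's formula forces $e^{\int_0^y b\,dz}W_{\mathbf U}(\lambda,y)$ to be constant in $y$ and such a zero yields a common $L^2$ solution. Lemma \ref{l:approx-r} furnishes, for $k:=\sqrt{m^2-\lambda}$ small, the approximation $\mathbf U_{\pm\infty}^\lambda(y)=e^{\mp ky}(\mathbf Y_{\pm\infty}^{m^2}(y)-\mathbf E_{\pm\infty}(y))$ with $\|\mathbf E_\infty\|_{L^\infty([0,\infty))}+\|\mathbf E_{-\infty}\|_{L^\infty((-\infty,0])}\leq C(k+\delta)$, and multilinearity of the determinant gives
\[
W_{\mathbf U}(\lambda,0)=\det(\mathbf Y_\infty^{m^2},\mathbf Y_{-\infty}^{m^2})(0)+\det(\mathbf Y_\infty^{m^2},\mathbf U_{-\infty}^\lambda-\mathbf Y_{-\infty}^{m^2})(0)+\det(\mathbf U_\infty^\lambda-\mathbf Y_\infty^{m^2},\mathbf Y_{-\infty}^{m^2})(0)+O((k+\delta)^2).
\]

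In case (a), the simple resonance assumption forces $\mathbf Y_{\pm\infty}^{m^2}=c_\pm\mathbf R$ (with $\mathbf R=(R,R')$), so the leading determinant vanishes and the analysis reduces to the two mixed determinants. For the first, the identity $(\det(\mathbf Y_\infty^{m^2},\mathbf U_{-\infty}^\lambda))'(y)=Y_\infty^{m^2}[(d+k^2)U_{-\infty}^\lambda-b(U_{-\infty}^\lambda)']$ (derived directly from the ODEs for $\mathbf Y_\infty^{m^2}$ and $\mathbf U_{-\infty}^\lambda$), together with integration from $-\infty$ (where the determinant vanishes because $\mathbf U_{-\infty}^\lambda$ decays like $e^{ky}$) to $0$, substitution of Lemma \ref{l:approx-r}, and the analogous computation on $[0,\infty)$, yields
\[
W_{\mathbf U}(\lambda,0)=c_+c_-\int_\R e^{-k|y|}\bigl[(d+k^2)R^2-bRR'\bigr]\,dy+O((k+\delta)^2).
\]
Dominated convergence sends the $d$- and $b$-part of the integrand to the integral $I:=\int_\R R[dR-bR']\,dy$ from the hypothesis as $k\to 0^+$; for the $k^2$-part I split $R^2=R_\infty^2+(R^2-R_\infty^2)$, where $R_\infty$ denotes the piecewise-constant extension of the nonzero limits $R_\pm:=\lim_{y\to\pm\infty}R(y)$. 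The explicit computation $k^2\int_\R e^{-k|y|}R_\infty^2\,dy=k(R_+^2+R_-^2)$, combined with the exponential decay of $R-R_\pm$ inherited from the ODE $R''=(F''(S)-m^2)R$ and the exponential decay of $F''(S)-m^2$, gives
\[
W_{\mathbf U}(\lambda,0)/(c_+c_-)=I+k(R_+^2+R_-^2)+\mathcal E(k,\delta),
\]
with $\mathcal E(0,\delta)=O(\delta^2)$. When $I<0$ this is negative at $k=0$ but strictly positive at some $k_1=O(\delta)$ chosen so that $k_1(R_+^2+R_-^2)$ dominates $|I|$ plus the error, so by continuity $W_{\mathbf U}(\cdot,0)$ has a zero at some $k_*\in(0,k_1)$, giving an eigenvalue $\lambda_*=m^2-k_*^2$ with $m^2-\lambda_*=O(\delta^2)\leq c_0\delta$; the pointwise bound on the eigenfunction $Y_{\lambda_*}$ then follows immediately from Lemma \ref{l:approx-r}. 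When $I>0$, the same expansion remains positive over the relevant $k$-range, so no eigenvalue exists there.

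For part (b), the non-resonance, non-eigenvalue hypothesis at $m^2$ for $\mathcal L_S$ makes $\mathbf Y_\infty^{m^2}$ and $\mathbf Y_{-\infty}^{m^2}$ linearly independent and $D_0:=\det(\mathbf Y_\infty^{m^2},\mathbf Y_{-\infty}^{m^2})(0)\neq 0$; the same multilinear expansion simplifies to $W_{\mathbf U}(\lambda,0)=D_0+O(k+\delta)$, which is nonzero for small $k,\delta$, ruling out both eigenvalues of $\mathcal L_T$ near $m^2$ and the persistence of a resonance or embedded eigenvalue at $\lambda=m^2$ itself. The main technical obstacle in case (a) is a sufficiently sharp, uniform bound on $\mathcal E(k,\delta)$, particularly the correction $\int_\R(1-e^{-k|y|})[dR^2-bRR']\,dy$: dominated convergence gives only that this term is $o(1)$ as $k\to 0$ without a rate in $k$, and pushing it down to lower order than $I+k(R_+^2+R_-^2)$ in the regime $k\sim\delta$ requires a careful split-domain estimate that leverages the full $L^1\cap L^\infty$ strength of the bounds on $b,d$ together with the boundedness of $R$ and $R'$.
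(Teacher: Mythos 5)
Your overall strategy (Wronskian of the two decaying Jost-type solutions, approximation near the threshold via Lemma \ref{l:approx-r}, sign/IVT analysis in $k=\sqrt{m^2-\lambda}$) is the same as the paper's, and your part (b) argument matches the paper's. But there is a genuine gap in part (a): your one-step comparison $\mathbf U^\lambda_{\pm\infty}\approx e^{\mp ky}\mathbf Y^{m^2}_{\pm\infty}$ produces the \emph{weighted} leading term $\int_\R e^{-k|y|}[dR^2-bRR']\dd y$ with a claimed $O((k+\delta)^2)$ error, and this expansion is not correct. The error bound in Lemma \ref{l:approx-r} is not uniform as $k\to 0$ at the order you need: the kernel entry $(e^{2k(y-w)}-1)/k$ is only bounded by $\min(1/k,\,2|y-w|)$, so with coefficients that are merely $L^1$ (no decay rate) the inhomogeneity $\mathbf J_2$ contributes up to $\delta/k$ to the first component of the error, and consequently the cross term $\det(\mathbf e_+,\mathbf e_-)$ you discard as $O((k+\delta)^2)$ can be of size $\delta$ — the same order as the main term. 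A concrete check: take the flat model $F''(S)\equiv m^2$, $R\equiv 1$, $b=0$, and $d$ an attractive bump of mass $\gamma<0$ located at $y=-L$ with $L\gg 1/k$. The exact Wronskian is $2k+\gamma$ (the bound state sits at $k=|\gamma|/2$ regardless of $L$), matching the \emph{unweighted} formula $A(k)+\int dR^2$, whereas your weighted formula gives $2k+\gamma e^{-kL}\approx 2k$; the missing $\gamma/2(1-e^{-2kL})$ is exactly the cross term you dropped. So the obstacle you flag at the end — controlling $\int(1-e^{-k|y|})[dR^2-bRR']\dd y$ — is not a removable technicality: under the stated hypotheses that quantity can be of order $\delta$, and no split-domain estimate will push it below the main term; the correct conclusion is that the weight should not be there at all.

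The paper avoids this by a two-stage comparison that exploits the cancellation of exponential factors in products of one decaying and one growing solution: it first expands the unperturbed Wronskian $W_{\mathbf Y}(k,0)=c_+c_-A(k)+O(k^2)$ with $A(k)\gtrsim k$ (this is where the $e^{-k|y|}$ weight legitimately appears, against $k^2R^2$), and then treats the perturbation through $\det(\tilde{\mathbf E}_{\pm\infty},\mathbf Y^\lambda_{\mp\infty})$ with $\tilde{\mathbf E}_{\pm\infty}=\mathbf U^\lambda_{\pm\infty}-\mathbf Y^\lambda_{\pm\infty}$, whose derivative is $[dU^\lambda_{\pm\infty}-b(U^\lambda_{\pm\infty})']Y^\lambda_{\mp\infty}$; there the factors $e^{-kw}$ and $e^{+kw}$ cancel in the product, yielding the unweighted $c_+c_-\int_\R[dR^2-bRR']\dd y$ up to $O(\delta k)+O(\delta^2)$, and the sign dichotomy then follows from $A(k)\geq A_0k>0$. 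To repair your argument you should reorganize the expansion along these lines (pair $U^\lambda_{\pm\infty}$ or its error with $Y^\lambda_{\mp\infty}$, not with $Y^{m^2}_{\mp\infty}$), so that the perturbation term appears without the $e^{-k|y|}$ weight; as written, your final formula would even predict the wrong answer for coefficients supported at distance $\gtrsim 1/k$.
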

\begin{remark}
As above, using \eqref{e:d-expansion}, the quantity $\int_\R R [dR-bR']\dd w$ has the same sign as
\begin{equation}\label{e:formula2}
\int_{\R} \left[R^2(y)\left(-c(y) + F'''(S)(y) \int_\R \tilde G(y,w)[bS'+cS](w)\dd w\right) - b(y)R'(y)R(y)\right] \dd y    
\end{equation}
\end{remark}
\begin{proof}
With $Y_{\pm\infty}^{m^2}$ solving \eqref{e:Y-m}, we have $Y_{\pm\infty}^{m^2} = c_\pm R$, for constants $c_\pm$. 

Our first step is to analyze the unperturbed Wronskian $W_{\mb Y}(\lambda, y) = \det(\mb Y_\infty^\lambda, \mb Y_{-\infty}^\lambda)$. With $\lambda$ near $m^2$ and $k = \sqrt{m^2 - \lambda}$, by abuse of notation, we write $W_{\mb Y}(k,y) = W_{\mb Y}(m^2-k^2,y)$. Applying Lemma \ref{l:approx-r} with $b = d = 0$, we may write 
\[ \mb Y_{\pm\infty}^\lambda(y) = e^{\mp ky} \mb (Y_{\pm \infty}^{m^2}(y) + \mb E_{\pm \infty}), \quad y\in \R,\]
with $\|\mb E_\infty\|_{L^\infty([0,\infty),\R^2)} + \|\mb E_{-\infty}\|_{L^\infty((-\infty,0],\R^2)} \lesssim k$. By the equation satisfied by $Y_{\pm\infty}^\lambda$, the Wronskian $W_{\mb Y}(\lambda,y)$ is independent of $y$. Proceeding as in the proof of Theorem \ref{t:eigenvalue}, we have as before (see \eqref{e:WU})
\begin{equation}\label{e:WY-R}
 \begin{split}
W_{\mb Y}(k,0) &= \det(\mb Y_\infty^{m^2}(0), \mb Y_{-\infty}^\lambda(0)) + \det(\mb Y_\infty^\lambda(0), \mb Y_{-\infty}^{m^2}(0)) + O(k^2).
\end{split}
\end{equation}
A direct calculation shows $\det(\mb Y_{\pm\infty}^{m^2}, \mb Y_{\mp\infty}^\lambda)'(y) = (m^2-\lambda) Y_{\pm \infty}^{m^2} Y_{\mp\infty}^\lambda$, which gives, since $k^2 = m^2-\lambda$,
\begin{equation}\label{e:detW}
\begin{split}
 W_{\mb Y}(k,0) &= k^2 \int_{-\infty}^0 Y_{\infty}^{m^2} Y_{-\infty}^\lambda \dd w + k^2 \int_0^\infty Y_{-\infty}^{m^2} Y_\infty^\lambda \dd w + O(k^2)\\
 &= k^2 \int_{-\infty}^0 Y_\infty^{m^2} e^{kw} (Y_{-\infty}^{m^2} + E_{-\infty}^1) \dd w + k^2\int_0^\infty Y_{-\infty}^{m^2} e^{-kw} (Y_\infty^{m^2} +  E_\infty^1) \dd w + O(k^2)\\
 &= k^2 c_+ c_- \int_{-\infty}^\infty e^{-k|w|} R^2 \dd w\\
 &\quad   + k^2 \int_{-\infty}^\infty R(w) e^{-k|w|}  \left( 1_{\{w<0\}} c_+ E_{-\infty}^1 + 1_{\{w\geq 0\}} c_- E_{\infty}^1\right) \dd w  + O(k^2).
 \end{split}
 \end{equation}
Note that all integrals converge, since $Y_{\pm\infty}^{m^2}$, $e^{\pm ky} Y_{\pm\infty}^\lambda$, and $e^{\pm ky} E_{\pm \infty}^1$ are all uniformly bounded. 

In the last expression of \eqref{e:detW}, we note that the first term is proportional to $k$. Indeed, since $R$ has non-zero limits as $y\to \pm \infty$, there exist $\zeta,M>0$ (independent of $k$) such that $R^2(y)\geq \zeta$ if $|y|\geq M$. As a result, for any $k\in (0,1)$, one has $\int_{\R} e^{-k|w|} R^2 \dd w \geq 2\zeta e^{-M}/k$. On the other hand, we have $\int_{\R} R^2 e^{-k|w|}\dd w \leq 2\|R\|_{L^\infty(\R)}^2 /k$. It is also clear that, since $\|\mb E_{\pm\infty}\|_{L^\infty(\R)} \lesssim k$, the second term on the right in \eqref{e:detW} is $O(k^2)$. To sum up, we have shown
\begin{equation}\label{e:wyk0}
 W_{\mb Y}(k,0) = c_+c_- A(k) + O(k^2),
 \end{equation}
with $A(k)\geq A_0 k$ for some $A_0>0$ independent of $k$.

Now we turn to the perturbed operator $\mathcal L_T$. Let $U_{\pm \infty}^\lambda$ be the solutions to \eqref{e:U-lambda} as above. Applying Lemma \ref{l:approx-r} with $\lambda_1 = \lambda_2 = \lambda$,  we have
 \begin{equation}\label{e:E-R}
 \begin{split}
 \mb U_{\pm \infty}^\lambda(y) &=  \mb Y_{\pm\infty}^{\lambda}(y) +  \tilde{\mb E}_{\pm \infty}(y),
 \end{split}
 \end{equation}
 with $\|e^{ky}\tilde{\mb E}_{\infty}\|_{L^\infty([0,\infty),\R^2)}+\|e^{-ky}\tilde{\mb E}_{-\infty}\|_{L^\infty((-\infty,0],\R^2)}\lesssim \|d\|_{L^1(\R)} + \|b\|_{L^1(\R)} \lesssim \delta$.

With the Wronskian $W_{\mb U}(\lambda,y)$ defined as in the proof of Theorem \ref{t:eigenvalue}, we again write $W_{\mb U}(k,y) = W_{\mb U}(m^2-k^2,y)$, and obtain 
\begin{equation}\label{e:WU-R}
 \begin{split}
W_{\mb U}(\lambda,0) &= \det(\mb Y_\infty^\lambda(0), \mb Y_{-\infty}^\lambda(0)) + \det(\tilde {\mb E}_\infty(0), \mb Y_{-\infty}^\lambda(0)) + \det(\mb Y_\infty^\lambda(0), \tilde {\mb E}_{-\infty}(0)) + \det(\tilde {\mb E}_\infty(0),\tilde {\mb E}_{-\infty}(0))\\
 &= W_{\mb Y}(\lambda,0)  +   \det(\tilde {\mb E}_\infty(0), \mb Y_{-\infty}^\lambda(0)) + \det(\mb Y_\infty^\lambda(0), \tilde {\mb E}_{-\infty}(0)) + O(\delta^2).
\end{split}
\end{equation}
Since $\tilde {\mb E}_{\pm \infty} = U_{\pm\infty}^\lambda - Y_{\pm \infty}^\lambda$ satisfy $\tilde {\mb E}_{\pm\infty}'' = (F''(S) - m^2 - \lambda)\tilde {\mb E}_{\pm\infty} - b(U_{\pm\infty}^\lambda)' + dU_{\pm\infty}^\lambda$, we have
\[\det(\tilde {\mb E}_{\pm\infty}, \mb Y_{\mp\infty}^\lambda)'(y) = [d U_{\pm\infty}^\lambda - b(U_{\pm\infty}^\lambda)'] Y_{\mp\infty}^{\lambda}.\]
Because $b,d\in L^1(\R)$, $|U_\infty^\lambda| \lesssim e^{-ky}$, and $|Y_{-\infty}^\lambda|\lesssim e^{ky}$, the expression $\left[d  U_{\infty}^\lambda - b (U_{\infty}^\lambda)'\right] Y_{-\infty}^\lambda$ is integrable on $[0,\infty)$, and we can use \eqref{e:E-R} to write
\[ 
\begin{split}
\det( \tilde {\mb E}_\infty(0), \mb Y_{-\infty}^\lambda(0)) &= \int_{0}^\infty \left[d  U_{\infty}^\lambda - b (U_{\infty}^\lambda)'\right] Y_{-\infty}^\lambda \dd w\\
& = \int_0^\infty  [dY_\infty^\lambda - b(Y_\infty^\lambda)'] Y_{-\infty}^\lambda \dd w + \int_0^\infty [d \tilde E_\infty^1 - b \tilde E_\infty^2] Y_{-\infty}^\lambda \dd w,
\end{split}
\]
where the last integral converges and is $O(\delta^2)$ since $\|\tilde {\mb E}_\infty\| \lesssim \delta e^{-ky}$ and $Y_{-\infty}^\lambda \lesssim e^{ky}$. 
For the first integral on the right, we use Lemma \ref{l:approx} with $d=b=0$ and obtain
\[\int_0^\infty [dY_\infty^\lambda - b(Y_\infty^\lambda)']Y_{-\infty}^\lambda \dd w = \int_0^\infty [dY_\infty^{m^2} - b(Y_\infty^{m^2})']Y_{-\infty}^{m^2} \dd w + O(\delta k).\]
After applying a similar analysis to $\det(\mb Y_\infty^\lambda(0), \tilde {\mb E}_{-\infty}(0))$, the expression \eqref{e:WU-R} becomes 
\[ 
W_{\mb U}(k,0) = W_{\mb Y}(k,0) + c_+c_-\int_{-\infty}^\infty  [d(R(w))^2 - b R(w)R'(w)]\dd w + O(\delta k) +  O(\delta^2).
\]
With \eqref{e:wyk0}, this implies
\[ W_{\mb U}(k,0) = c_+c_- \left( A(k) + \int_{-\infty}^\infty [d (R(w))^2 - bR(w) R'(w)]\dd w \right) + O(\delta k) + O(\delta^2).\]
For $\delta$ small enough, the expression inside the parentheses determines whether any zeroes of $W_{\mb U}(k,0)$ are present for $k>0$. The bound $A(k) \geq A_0 k$ with $A_0>0$ implies statement (a) of the theorem.

For statement (b), the assumption that $m^2$ is not a resonance or eigenvalue implies $W_{\mb Y}(0,0) \neq 0$. The approximation \eqref{e:E-R} easily implies $W_{\mb U}(0,0) = W_{\mb Y}(0,0) + O(\delta) \neq 0$ for $\delta$ small enough.
\end{proof}

\section{Orbital stability}\label{s:orbital}

In this section, we prove orbital stability, i.e. that solutions starting close to $T$ are always close to some shifted version of $T$. 


\begin{proof}[Proof of Theorem \ref{t:orbital}]
For any solution $u$ of \eqref{e:main-y}, the energy
\[ E(u) = \int_{\R} \left[ \frac 1 2 (\partial_t u)^2 + \frac 1 2 (\partial_y u)^2 - \frac 1 2 cu^2 + F(u)\right] \omega(y)\dd y,\]
is conserved, where $\omega(y) = \exp(\int_{-\infty}^y b(z)\dd z) = 1 + O(\delta)$, uniformly in $y$. We also define the potential energy
\[ E_p(u) = \int_\R \left[\frac 1 2 (\partial_y u)^2 -\frac 1 2 cu^2+ F(u)\right] \omega(y)\dd y.\]
A simple computation shows that 
\begin{equation}\label{e:E-approx}
|E_p(\psi) - \tilde E_p(\psi)| \leq  C\delta\left(\|\psi\|_{L^\infty(\R)}^2 + \tilde E_p(\psi)\right) , \quad \psi \in H^1_T(\R),
\end{equation}
where $\tilde E_p$ is the potential energy corresponding to the constant coefficient equation \eqref{e:constant-speed}:
\[ \tilde E_p(\psi) := \int_\R \left[ \frac 1 2 (\partial_t \psi)^2 + \frac 1 2 (\partial_y \psi)^2  + F(\psi)\right] \dd y \]
The idea is to use the (known) property that $\tilde E_p(\psi) - \tilde E_p(S)$ controls the distance between $\psi$ and $S$, to show the corresponding fact for $E_p$ and $T$. In more detail, for $q>0$, define
\[ d_q(\psi,T)^2 := \inf_{\xi\in \R}\int_{\R} [(\partial_y \psi(y) - T'(y+\xi))^2 + q (\psi(y)-T(y+\xi))^2 ] \dd y, \]
for any $\psi$ in the energy space. We define $d_q(\psi,S)$ in the analogous way. Proposition 1 of \cite{henry-perez-wreszinski} proves the following: There exist $C, r, q>0$ such that 
\[ d_q(\psi,S)^2 \leq C (\tilde E_p(\psi) - \tilde E_p(S)),  \]
whenever $d_q(\psi,S) \leq r$.\footnote{Proposition 1 in \cite{henry-perez-wreszinski} is stated for $u(t,\cdot)$ where $u$ is a solution of \eqref{e:constant-speed}, but an examination of the proof shows that the conclusion holds for any $\psi(y)$ satisfying the hypotheses stated here.} Note that
\[\begin{split}
 |\tilde E_p(T) -\tilde  E_p(S)| &= \left| \int_\R \left[ \frac 1 2 (\partial_y S_b)^2 +\partial_y S \partial_y S_b+ F(S+S_b) - F(S) \right]\dd y\right|\\
&\leq  \|S_b\|_{H^1(\R)}^2 + \|\partial_y S\|_{L^2(\R)}^2 + \|F\|_{C^1([a_-,a_+]} \|S_b\|_{L^1(\R)}\\
& \lesssim \delta,
\end{split}\]
by Theorem \ref{t:exist}. Using \eqref{e:E-approx} twice, we then have
\begin{equation}\label{e:dS}
\begin{split}
 d_q(\psi,S)^2 
  &\leq C\left(E_p(\psi) - E_p(S)\right) + C\delta\left(\||\psi| + |T|\|_{L^\infty(\R)}^2 + \tilde E_p(\psi) + \tilde E_p(S)\right)\\
  &\leq C\left(E_p(\psi) - E_p(T)\right) + C\delta\left(1+ \tilde E_p(\psi) + \tilde E_p(S)\right).
 \end{split}
\end{equation}
To get to the last line, we used Sobolev embedding to write $\|\psi\|_{L^\infty(\R)} \leq C_q d_q(\psi,S)$, and combined this term into the left-hand side. 

 Since $\int_{\R} (u(t,y)-S(y+\xi))^2 \dd y \to \infty$ as $\xi\to \pm\infty$, there is some $\xi_0\in \R$ where the infimum defining $d_q(u(t,\cdot),S)$ is achieved. To save space, write $T_\xi = T(y+\xi_0)$, and similarly for $S_\xi$ and $S_{b,\xi}$. We then have
\begin{equation}\label{e:dT}
 \begin{split}
d_q(\psi,T)^2 &\leq \int_{\R}\left[ (\partial_y \psi - T'_\xi)^2 + q(\psi-T_\xi)^2\right] \dd y \\
&=  \int_{\R}\left[ (\partial_y \psi - S'_\xi)^2 +  q(\psi-S_\xi)^2\right] \dd y\\
&\quad + \int_\R \left[ (S_{b,\xi}')^2 + q S_{b,\xi}^2  -2 (\partial_y \psi - S_\xi')\partial_yS_{b,\xi} - 2q(\psi - S_\xi) S_{b,\xi}\right]\dd y\\
&\leq d_q(\psi,S)^2 + Cq\|S_b\|_{H^1(\R)}^2 + \|S_b\|_{H^1(\R)}\|\psi-S\|_{H^1(\R)}\\
&\leq d_q(\psi,S)^2 + Cq\|S_b\|_{H^1(\R)}^2 + \|S_b\|_{H^1(\R)} d_q(\psi,S)^2\\
&\leq 2 d_q(\psi,S)^2 + C\delta^2.
\end{split}
\end{equation}
For $\delta>0$ small enough compared to $r$ and $q$, this implies $d_q(\psi,T)^2 \leq 2d_q(\psi,S)^2 + r/2$. By exchanging the roles of $T$ and $S$ in this calculation, we also obtain $d_q(\psi,S)^2 \leq 2d_q(\psi,T) + r/2$. 

Next, combining \eqref{e:dS} and \eqref{e:dT}, 
\begin{equation}\label{e:dq}
\begin{split}
d_q(\psi,T)^2 &\leq C\left(E_p(\psi) - E_p(T)\right) + C\delta\left(1 + \tilde E_p(\psi) + \tilde E_p(S)\right).
\end{split}
\end{equation}
This inequality holds for $\psi$ such that $d_q(\psi,S) \leq r$. By above, we can ensure this condition by choosing $d_q(\psi,T) \leq r/4$.

Now, for a solution $u$ to \eqref{e:main-y} with $d_q(u(0,\cdot),T) \leq r/4$ and $\partial_t u(0,\cdot)$ sufficiently small in $L^2(\R)$, \eqref{e:dq} implies that 
\[ d_q(u(t,\cdot),T)^2 \leq C( E( u(t,\cdot)) - E(T)) + C\delta\left(1+\tilde E_p(u(t,\cdot)) + \tilde E_p(S)\right).\]
The quantity $E(u(t,\cdot))$ is conserved in time. Calculations similar to \eqref{e:E-approx} show that $\tilde E_p(u) \leq 2 E_p(u) + C\delta \|u\|_{L^\infty(\R)}^2\leq E(u) + C\delta d_q(u,T)^2$, and the last term may be combined into the left side. We finally have
\[ d_q(u(t,\cdot),T)^2 \leq C(E(u_0)- E(T)) + C\delta(1+\tilde E_p(S)).\]
This right-hand side is independent of $t$, which implies the solution $u$ never leaves the neighborhood of $T$ as long as it exists. As above, for every $t$, there is some $\xi = \xi(t)$ at which the infimum defining $d_q(u,T)$ is achieved. 
By standard arguments, this time-independent bound on $d_q(u(t,\cdot),T)$ combined with energy conservation implies the solution $u$ exists for all $t\in [0,\infty)$.
\end{proof}

\appendix

\section{ODE Methods}\label{s:a}

In this section, we collect some convenient facts about the solvability and asymptotics of $2\times 2$ first-order systems on $\R$.

First, we have a standard lemma on vector-valued integral equations of Volterra type:

\begin{lemma}\label{l:volterra}
For $a \in \R$ and $\mb U\in L^\infty([a,\infty),\R^2)$, the Volterra equation
\[ \mb Z(y) = \mb U(y) + \int_y^\infty K(y,w) \mb Z(w) \dd w,\]
has a unique solution in $L^\infty([a,\infty),\R^2)$, provided
\begin{equation}\label{e:mu}
 \mu := \int_a^\infty \sup_{a<y<w} \|K(y,w)\| \dd w < \infty,
 \end{equation}
where $\|\cdot\|$ is the operator norm of the matrix $K(y,w)$. 
This solution is given by the iteration
\begin{equation}\label{e:series}
 \mb Z(y) = \mb U(y) + \sum_{n=1}^\infty \int_a^\infty \cdots \int_a^\infty \prod_{i=1}^n 1_{\{y_{i-1} < y_i\}} K(y_{i-1}, y_i) \mb U(y_n) \dd y_n \cdots \dd y_1,
 \end{equation}
with $y_0 = y$. This solution satisfies
\[ \|\mb Z\|_{L^\infty([a,\infty),\R^2)} \leq e^\mu \|\mb U\|_{L^\infty([a,\infty),\R^2)}.\]

\end{lemma}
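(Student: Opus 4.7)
The plan is to set up the standard Neumann / Picard iteration for Volterra equations on a half-line. Although the operator
\[ (\mathcal T \mb Z)(y) := \mb U(y) + \int_y^\infty K(y,w) \mb Z(w)\dd w \]
is not a contraction on $L^\infty$ unless $\mu<1$, the right quantity is not $\mu$ but a tail integral that provides factorial decay of the iterates. Concretely, I would set $\kappa(w) := \sup_{a<y<w}\|K(y,w)\|$, so $\int_a^\infty \kappa \dd w = \mu$, and $M(y) := \int_y^\infty \kappa(w)\dd w$, which is nonincreasing, bounded by $\mu$, and satisfies $M'(y) = -\kappa(y)$ a.e.

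Next, define iterates $\mb Z_0 := \mb U$ and, inductively, $\mb Z_{n+1}(y) := \int_y^\infty K(y,w)\mb Z_n(w)\dd w$. The heart of the argument is the inductive bound
\[ |\mb Z_n(y)| \leq \|\mb U\|_{L^\infty([a,\infty),\R^2)}\, \frac{M(y)^n}{n!}, \qquad y\geq a. \]
The base case $n=0$ is immediate. Assuming the bound at level $n$, since $\|K(y,w)\|\leq \kappa(w)$ whenever $a<y<w$,
\[ |\mb Z_{n+1}(y)| \leq \|\mb U\|_\infty \int_y^\infty \kappa(w)\,\frac{M(w)^n}{n!}\dd w = -\|\mb U\|_\infty \int_y^\infty \frac{M'(w) M(w)^n}{n!}\dd w = \|\mb U\|_\infty\, \frac{M(y)^{n+1}}{(n+1)!}, \]
using $M(\infty)=0$. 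Summing a geometric-type series then gives $\mb Z := \sum_{n\geq 0} \mb Z_n$, with absolute and uniform convergence, and the bound $\|\mb Z\|_{L^\infty([a,\infty),\R^2)} \leq e^\mu \|\mb U\|_\infty$. Unwinding the iteration once produces exactly the explicit series \eqref{e:series}, and dominated convergence justifies passing $\mathcal T$ through the sum, so $\mathcal T \mb Z = \mb Z$.

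For uniqueness, if $\mb Z_1, \mb Z_2$ are two $L^\infty$ solutions, their difference $\mb W$ solves the homogeneous equation $\mb W = \mathcal T_0 \mb W$ with $\mathcal T_0$ defined by dropping $\mb U$. Iterating gives $\mb W = \mathcal T_0^n \mb W$ for every $n$, and the same induction yields $\|\mb W\|_\infty \leq (\mu^n/n!) \|\mb W\|_\infty$, which forces $\mb W \equiv 0$. No step here is technically delicate; the only mild point to be careful about is measurability of $w\mapsto \kappa(w)$ (which follows from \eqref{e:mu} making sense as a Lebesgue integral) and the use of $M'(w)=-\kappa(w)$ via the Lebesgue differentiation theorem. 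I do not expect any genuine obstacle — the entire argument is the textbook treatment of linear Volterra equations, adapted to the reversed direction of integration $[y,\infty)$ rather than $[a,y]$.
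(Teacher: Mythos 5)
Your proof is correct: the factorial bound $|\mb Z_n(y)|\leq \|\mb U\|_\infty M(y)^n/n!$ via $M'=-\kappa$, the resulting Neumann series, and the homogeneous-equation iteration for uniqueness are exactly the standard Volterra argument. The paper itself omits the proof and simply cites the scalar-valued version (Lemma 2.4 of the Krieger--Schlag reference), whose proof is this same iteration, so you have in effect written out in full the argument the paper invokes by reference.
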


\begin{proof}
See \cite[Lemma 2.4]{schlag2010conical} for a proof of the corresponding fact for scalar-valued Volterra equations. The proof in the present vector-valued case is essentially the same, so we omit it.
\end{proof} 

Next, we address a class of linear systems that arise from the eigenvalue problems in Sections \ref{s:stationary} and \ref{s:spectrum}:

\begin{lemma}\label{l:system}
\begin{enumerate}
\item[(a)] For $k>0$, consider the system
\begin{equation}\label{e:Z}
 \mb Y'(y) = (M_1 + M_2(y) ) \mb Y(y),
 \end{equation}
where
\[  M_1 = \left(\begin{array}{cc}0 & 1\\ k^2  & 0     \end{array}\right), \quad M_2(y) = \left(\begin{array}{cc}0 & 0\\ V(y)  & -b(y)     \end{array}\right) \]
with $V, b\in L^1(\R)$. 
There exist solutions $\mb Y_{-\infty}$, $\mb Y_\infty$ defined on $\R$, such that 
\begin{equation}\label{e:infty}
 \lim_{y\to \infty} e^{ky} \mb Y_\infty(y) = \left(\begin{array}{c} 1\\  - k\end{array}\right), \quad  \lim_{y\to -\infty} e^{-ky} \mb Y_{-\infty}(y) = \left(\begin{array}{c} 1\\   k\end{array}\right),
 \end{equation}
and the bound $|\mb Y_\infty(y)| \leq Ce^{-ky}$ holds for all $y\in \R$, where the constant depends on $k$ and $\|V+b\|_{L^1(\R)}$. These solutions also satisfy the integral equations
\begin{equation}\label{e:integrals}
\begin{split}
 \mb Y_\infty(y) &= \left(\begin{array}{c} 1\\  -k\end{array}\right)e^{-ky}  -  \frac 1 2 \int_{y}^\infty (V(w)Y_\infty - b  Y_\infty'(w) ) \left(\begin{array}{c} \frac 1 k (e^{k(y-w)} - e^{-k(y-w)})\\ e^{k(y-w)} + e^{-k(y-w)}\end{array}\right)\dd w,\\
 \mb Y_{-\infty}(y) &=  \left(\begin{array}{c} 1\\  k\end{array}\right)e^{ky}  +  \frac 1 2  \int_{-\infty}^y (V(w)Y_{-\infty} - b  Y_{-\infty}'(w) ) \left(\begin{array}{c} \frac 1 k (e^{k(y-w)} - e^{-k(y-w)})\\ e^{k(y-w)} + e^{-k(y-w)}\end{array}\right)\dd w.
 \end{split}  
 \end{equation}

\item[(b)] For $k= 0$, assume in addition that $(1+|y|^2)^{1/2} V$ and  $(1+|y|^2)^{1/2}b$ lie  in $L^1(\R)$. Then there exist solutions $\mb Y_{-\infty}, \mb Y_\infty$ to \eqref{e:Z} satisfying
\begin{equation*}
 \lim_{y\to \infty} \mb Y_{\pm\infty}(y) = \left(\begin{array}{c} 1\\  0\end{array}\right),
 \end{equation*}
as well as the integral equations
\begin{equation}
\begin{split}
 \mb Y_\infty(y) &= \left(\begin{array}{c} 1\\  0\end{array}\right)  -  \int_{y}^\infty (V(w)Y_\infty - b  Y_\infty'(w) ) \left(\begin{array}{c} y-w\\ 1 \end{array}\right)\dd w,\\
 \mb Y_{-\infty}(y) &=  \left(\begin{array}{c} 1\\  0\end{array}\right)  +   \int_{-\infty}^y (V(w)Y_{-\infty} - b  Y_{-\infty}'(w) ) \left(\begin{array}{c} y-w\\ 1\end{array}\right)\dd w.
 \end{split}  
 \end{equation}
\end{enumerate}

\end{lemma}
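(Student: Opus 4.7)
The plan is to interpret \eqref{e:integrals} and its $k=0$ counterpart as Duhamel / variation-of-parameters formulas for \eqref{e:Z}, and then to solve them by the Volterra fixed point of Lemma~\ref{l:volterra}. The unperturbed system $\mathbf Y'=M_1\mathbf Y$ has fundamental matrix $e^{yM_1}$: in case (a), $M_1$ has eigenvalues $\pm k$ with eigenvectors $(1,\mp k)^T$, and a direct computation gives $e^{yM_1}(0,1)^T=\tfrac12((e^{ky}-e^{-ky})/k,\,e^{ky}+e^{-ky})^T$, while in case (b), $M_1$ is nilpotent and $e^{yM_1}(0,1)^T=(y,1)^T$. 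Since $M_2(w)\mathbf Y(w)=(V(w)Y-b(w)Y')(0,1)^T$, convolution against $e^{(y-w)M_1}$ reproduces exactly the scalar kernels appearing in \eqref{e:integrals} and in its $k=0$ analogue. I take $\mathbf Y_\infty$ to be the solution of the Duhamel formulation with integral running from $+\infty$ back to $y$ and non-integrated term equal to the admissible mode of $M_1$ at $+\infty$, i.e.\ $e^{-ky}(1,-k)^T$ in (a) and $(1,0)^T$ in (b); $\mathbf Y_{-\infty}$ comes from the mirror construction on $(-\infty,y]$.

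For case (a), the raw Duhamel kernel $e^{(y-w)M_1}M_2(w)$ grows like $e^{k(w-y)}$ as $w\to\infty$, so the hypothesis $\mu<\infty$ of Lemma~\ref{l:volterra} fails directly. The natural fix is to pass to the rescaled vector $\tilde{\mathbf Y}(y):=e^{ky}\mathbf Y_\infty(y)$: after multiplying \eqref{e:integrals} by $e^{ky}$ and using $e^{ky}e^{\pm k(y-w)}=e^{kw}\cdot e^{2k(y-w)}$ or $e^{kw}$, the factor $e^{kw}$ recombines with $\mathbf Y_\infty(w)=e^{-kw}\tilde{\mathbf Y}(w)$ to produce a Volterra equation for $\tilde{\mathbf Y}$ whose kernel contains only the factors $e^{2k(y-w)}\pm 1$, which are bounded on $y\leq w$. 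Thus $\sup_{a<y<w}\|K(y,w)\|\leq C(|V(w)|+|b(w)|)$ and $\mu\leq C\|V+b\|_{L^1(\R)}<\infty$. Lemma~\ref{l:volterra} then gives a bounded $\tilde{\mathbf Y}$ on every half-line $[a,\infty)$; undoing the rescaling produces the pointwise bound $|\mathbf Y_\infty(y)|\leq Ce^{-ky}$, and the normalization \eqref{e:infty} follows by dominated convergence in \eqref{e:integrals} as $y\to\infty$.

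Case (b) runs along the same lines, but the Duhamel kernel now carries the algebraic factor $(y-w)$, whose linear growth in $|y-w|$ cannot be absorbed by any rescaling. This is the source of the stronger assumption $(1+|y|^2)^{1/2}V,\,(1+|y|^2)^{1/2}b\in L^1(\R)$: on the region $a\leq y<w$ one has $|y-w|\leq w-a$, so the Volterra kernel is bounded in norm by $C(1+|w-a|)(|V(w)|+|b(w)|)$, whose integrability in $w$ on $[a,\infty)$ is precisely the weighted hypothesis. I expect this weighted bookkeeping to be the main obstacle of the proof; no new idea is required, but one must be careful to control the $|y-w|$ factor uniformly in $y$. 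Once $\mu<\infty$ is checked, Lemma~\ref{l:volterra} produces a bounded $\mathbf Y_\infty$ on $[a,\infty)$, and the constant limit $(1,0)^T$ at $+\infty$ again follows by dominated convergence in the integral equation.

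Finally, in both cases the half-line solutions are extended to all of $\R$ by solving the linear ODE \eqref{e:Z} on the remaining compact interval — global existence on compact sets is automatic from Gronwall's inequality since $V$ and $b$ are locally integrable — and the displayed integral representations persist on $\R$ because both sides solve the same linear first-order system and agree on the half-line where the Volterra construction was performed.
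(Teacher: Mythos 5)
Your proposal is correct and follows essentially the paper's own route: interpret \eqref{e:integrals} as a Duhamel formula for \eqref{e:Z}, rescale by $e^{ky}$ so that the Volterra kernel only involves the factors $e^{2k(y-w)}\pm 1$, which are bounded for $y\le w$, close with Lemma \ref{l:volterra} using $\mu\lesssim_k\|V+b\|_{L^1(\R)}$, and in case (b) absorb the factor $y-w$ by the weighted hypothesis. The one point where you diverge is the extension to all of $\R$ together with the bound $|\mb Y_\infty(y)|\le Ce^{-ky}$ for $y<0$: the paper obtains this from an explicit reduction-of-order formula expressing $Y_\infty$ in terms of $Y_{-\infty}$ on $y<0$, whereas you get it by running the Volterra argument on every half-line $[a,\infty)$, where the kernel bound (hence $\mu$) is uniform in $a$. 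This works, and is arguably cleaner, provided you add the one-line observation that the uniqueness clause of Lemma \ref{l:volterra} forces the solutions for different $a$ to agree on overlaps, so they glue to a single solution on $\R$ inheriting the uniform bound. Do note, however, that your closing remark about extending by Gronwall on compact intervals cannot by itself replace this step in part (a): Gronwall yields existence on $\R$ but only a growth rate governed by $\|M_1\|=\max(1,k^2)$, which in general exceeds $k$, so the stated bound $Ce^{-ky}$ for $y<0$ must come from the uniform half-line Volterra estimate (or from the paper's reduction-of-order formula); for part (b), where no quantitative bound for $y<0$ is claimed, the ODE continuation suffices.
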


\begin{proof}
(a) Note that the eigenvalues of $M_1$ are $\pm k$ corresponding to eigenvectors $\left(\begin{array}{c} 1\\  \mp k\end{array}\right)$. We will find a solution to the integral equation 
\begin{equation}\label{e:Yintegral}
\mb Y_\infty(y) =  \left(\begin{array}{c} 1\\  -k\end{array}\right)e^{-ky}  -  \int_{y}^\infty e^{M_1 (y-w)} M_2(w) \mb Y_\infty(w)  \dd w, \quad y\in \R,
\end{equation}
satisfying $|\mb Y_\infty(y)|\leq Ce^{-ky}$ and $\lim_{y\to \infty} e^{ky}\mb Y_\infty = \left(\begin{array}{c} 1 \\ -k\end{array}\right)$. 
By direct calculation, such $\mb Y_\infty$ also solves \eqref{e:Z}, as well as the first integral equation in \eqref{e:integrals}. 
Letting $\mb Z(y) = e^{ky} \mb Y_\infty(y)$, \eqref{e:Yintegral} is equivalent to
\begin{equation}\label{e:Z-integral}
  \mb Z(y) =  \left(\begin{array}{c} 1\\  -k\end{array}\right)  -  \int_{y}^\infty e^{M_1 (y-w)} M_2(w) e^{k(y-w)}   \mb Z(w)  \dd w, \quad y\in \R.
  \end{equation}
By diagonalizing $M_1$, we obtain
\[ e^{M_1(y-w)}M_2 = \frac 1 2\left(\begin{array}{cc} \frac 1 k V (e^{k(y-w)} - e^{-k(y-w)}) & - \frac 1 k b (e^{k(y-w)} - e^{-k(y-w)})\\ 
 V (e^{k(y-w)} + e^{-k(y-w)}) & -b (e^{k(y-w)} + e^{-k(y-w)})\end{array}\right) .\]
With $K(y,w) := e^{M_1(y-w)} M_2(w)e^{k(y-w)}$, we therefore have
\[\|K(y,w)\|\leq C (1 + e^{2k(y-w)}) (|V(w)| + |b(w)|),\] 
and that
\[  \int_0^\infty \sup_{0<y<w} \|K(y,w)\| \dd w  \leq C (\|V\|_{L^1(\R)} + \|b\|_{L^1(\R)}). \]
Lemma \ref{l:volterra} now implies a solution to \eqref{e:Z-integral} exists on $[0,\infty)$, 
and $\|\mb Z\|_{L^\infty([0,\infty),\R^2)}$ is bounded by a constant, which implies the boundary condition \eqref{e:infty} holds for $\mb Y_\infty$, as well as the upper bound
\[  |\mb Y_\infty(y)| \leq Ce^{-ky}, \quad y\geq 0,\]
where $\mb Y_\infty = (Y_\infty, Y_\infty')$.  Applying a similar argument with $-y$ replacing $y$, we can obtain a solution $\mb Y_{-\infty}$ defined on $\R$ with 
\[ \lim_{y\to -\infty} e^{-ky} \mb Y_{-\infty}(y) = \left(\begin{array}{c} 1\\   k\end{array}\right) \quad \text{and} \quad |Y_{-\infty}(y)| + |Y_{-\infty}'(y)| \leq C e^{ky}, \quad y\leq 0.  \]
For $y< 0$, we can write
\[ Y_{\infty}(y) = c_0Y_{-\infty}(y) \left(\int_{0}^y\frac{\exp\left(-\int_{-\infty}^w b(z) \dd z\right) }{Y_{-\infty}^2(w)} \dd w + c_1\right), \]
with $c_0, c_1$ chosen so that $Y_\infty(0)$ and $Y_\infty'(0)$ match our previous definition. This formula implies $\mb Y_\infty = (Y_\infty, Y_\infty')$ solves \eqref{e:Z} and satisfies $|\mb Y_{\infty}(y)| \leq C e^{-ky}$ for negative $y$ also. By a similar method, we extend $\mb Y_{-\infty}$ to the real line and obtain $|\mb Y_{-\infty}(y)| \leq Ce^{ky}$ for all $y\in \R$.

(b) In the case $k=0$, we have
\[ e^{M_1 (y-w)} M_2(w) = \left(\begin{array}{cc} 1 & y-w\\ 0 & 1\end{array}\right)\left(\begin{array}{cc} 0 & 0 \\ V(y) & -b(y)\end{array}\right) =  \left(\begin{array}{cc} (y-w)V(y) & -(y-w)b(y) \\ V(y) & -b(y)\end{array}\right),\]
and the integral equation \eqref{e:Yintegral} reduces to 
\[ \mb Y_\infty(y) = \left(\begin{array}{c} 1\\  0\end{array}\right)  -  \int_{y}^\infty (V(w)Y_\infty - b  Y_\infty'(w) ) \left(\begin{array}{c} y-w\\ 1 \end{array}\right)\dd w.\]
Defining $K(y,w) = e^{M_1(y-w)} M_2(w)$ , we have
\[ \|K(y,w)\| \leq C \sqrt{1 + (y-w)^2} (|V(w)+|b(w)|), \]
and 
\[  \int_0^\infty \sup_{0<y<w} \|K(y,w)\| \dd w  \leq C (\|(1+|y|^2)^{1/2}V\|_{L^1(\R)} + \|(1+|y|^2)^{1/2}b\|_{L^1(\R)}). \]
By Lemma \ref{l:volterra}, a solution $\mb Y_\infty$ exists on $[0,\infty)$, which also solves \eqref{e:Z} by a direct calculation. Applying a similar method for $\mb Y_{-\infty}$ and extending both solutions to the real line proceeds as in the proof of (a).
\end{proof}

\bibliographystyle{acm}
\bibliography{sineGordon}
\end{document}